\newcommand{\R}{\mathbb{R}}
\newcommand{\Z}{\mathbb{Z}}
\renewcommand{\hat}{\widehat}
\newcommand{\norm}[1]{\big\| #1\big\|}
\numberwithin{equation}{section}
\newtheorem{thm}{Theorem}[section]
\newtheorem{lem}[thm]{Lemma}
\newtheorem{prop}[thm]{Proposition}
\theoremstyle{remark}
\newtheorem{rem}{Remark}[section]
\newcommand{\Del}[1]{}
\begin{document}

\title{Small data global regularity for half-wave maps in $n = 4$ dimensions}
\author{Anna Kiesenhofer and Joachim Krieger}

\subjclass{35L05, 35B40}

\keywords{wave equation, fractional wave maps}

\begin{abstract}
We prove that the half-wave maps problem on $\R^{4+1}$ with target $S^2$ is globally well-posed for smooth initial data which are small in the critical $l^1$ based Besov space. This is a formal analogue of the result \cite{Tat3}. 
\end{abstract}

\maketitle

\section{Introduction}

Denote by $\R^{n+1}$ the Minkowski space of dimension $n+1$, equipped with the standard metric of signature $(1,\,-1,\ldots, -1)$. Let $u: \R^{n+1}\longrightarrow S^2\hookrightarrow \R^3$, with the property that $\nabla_{t,x}u\in L^r(\R^n)$ for some $r\in (1,\infty)$, and furthermore $\lim_{|x|\rightarrow +\infty}u(t, x) = p$ for some fixed $p\in S^2$, for each $t$. Then defining the operator $(-\triangle)^{\frac12}u = -\sum_{j=1}^n (-\triangle)^{-\frac12}\partial_j (\partial_j u)$, we say that $u$ is a {\it{half-wave map}}, provided 
\begin{equation}\label{eq:halfwm}
u_t = u\times (-\triangle)^{\frac12}u.
\end{equation}
 This model was introduced in \cite{KS}, \cite{Le2}, and can be traced to the physics literature, see e. g. \cite{haldane}, as well as the introduction in \cite{KS}. From a mathematical perspective, this model is interesting as it constitutes a close relative of the {\it{Schrodinger maps equation}} on the one hand, given by 
 \[
 u_t = u\times \triangle u,
 \]
 and the classical {\it{wave maps}}, given by 
 \begin{equation}\label{eq:wm}
 \Box u = (-u_t\cdot u_t +\nabla u\cdot \nabla u)u,\,\,\Box = \partial_t^2 - \triangle. 
 \end{equation}
 In fact, the relation to the latter model becomes clearer when re-formulating \eqref{eq:halfwm} as a nonlinear system of wave equations with {\it{non-local source terms}}, as done in detail in section  2 in \cite{KS}: 
 \begin{equation}\label{eq:halfwmwaveform}\begin{split}
 (\partial_t^2 - \triangle) u &= u(\nabla u\cdot\nabla u - \partial_t u\cdot\partial_t u)\\
&+\Pi_{u_{\perp}}\big((-\triangle)^{\frac{1}{2}}u\big)(u\cdot (-\triangle)^{\frac{1}{2}}u)\\
&+u \times (-\triangle)^{\frac{1}{2}}(u \times (-\triangle)^{\frac{1}{2}}u) - u \times(u \times (-\triangle)u)
 \end{split}\end{equation}
 In the preceding equation, the notation $\Pi_{u_{\perp}}$ refers to projection onto the plane orthogonal to $u\in S^2$. Naturally the preceding equation coincides with \eqref{eq:wm}
 except for the last two, non-local terms on the right, and in particular, its scaling behaviour is like that of wave maps. On the other hand, the formally conserved quantity 
 \begin{equation*}
 E(t): = \int_{\R^n}\big|(-\triangle)^{\frac{1}{4}}u\big|^2\,dx
 \end{equation*}
gives an a priori bound only on $\|u\|_{\dot{H}^{\frac12}}$, and so the problem becomes {\it{energy critical}} in dimension $n = 1$. 
\\
In light of the problem \eqref{eq:halfwmwaveform}'s close proximity to wave maps, in particular the fact that it is of the (very) formal form $\Box u = u|\nabla u|^2$, it is then natural to inquire whether fundamental global regularity results for small data 
available for wave maps, such as those obtained in \cite{Tat3}, \cite{T1}, \cite{ShaStr1}, can be similarly established for half-wave maps. This question becomes particularly interesting in low spatial dimensions, where the {\it{null-structure}} in \eqref{eq:wm}, as well as the underlying geometry, play a pivotal role for the theory of wave maps. On the other hand, the 'null-structure' present in \eqref{eq:halfwmwaveform} appears to be of a more complicated nature. In \cite{KS}, the case of large dimensions $n\geq 5$ was handled, in particular establishing sharp space-time bounds for the last two terms in \eqref{eq:halfwmwaveform} using Strichartz estimates as well as the geometry underlying the problem. 
In this paper, we shall push the small data global regularity, relying mostly on Strichartz estimates for free waves, to its limits by settling the case of spatial dimension $n = 4$. Specifically, we settle the issue of global regularity of \eqref{eq:halfwm} for smooth initial data which are small in the {\it{critical Besov space}} $\dot{B}^{2,1}_2$: 
\begin{thm}\label{thm:Main} Let $u(0, \cdot) = u_0: \R^{4}\longrightarrow S^2$ a smooth datum  such that $u_0$ is constant outside of a compact subset of $\R^4$ (this condition in particular ensures that $(-\triangle)^{\frac12}u_0$ is well-defined). Also, assume the smallness condition 
\[
\big\|u_0\big\|_{\dot{B}^{2,1}_2} = \sum_{k\in\Z}2^{2k}\|P_k u\|_{L_x^2(\R^4)}<\epsilon
\]
where $\epsilon\ll 1$ sufficiently small. Then problem \eqref{eq:halfwm} admits a global smooth solution with this datum. 
\end{thm}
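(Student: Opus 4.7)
The plan is to iterate in an $l^1$-Besov framework built on Strichartz norms adapted to $n=4$, working with the reformulation \eqref{eq:halfwmwaveform} of the half-wave maps system as a semilinear wave equation with non-local source terms. After a Littlewood--Paley decomposition $u = \sum_{k \in \Z} P_k u$, I would control the dyadic pieces $P_k u$ in Strichartz spaces at the $\dot{H}^2$ scaling --- in particular the $n=4$ endpoint pair $L^2_t L^6_x$ --- together with the energy-type norm $L^\infty_t (\dot{H}^1 \cap \dot{H}^2)$, and then sum dyadically with an $l^1$ weight. This summation matches the $\dot{B}^{2,1}_2$ smallness hypothesis on the data exactly, so the task reduces to proving trilinear estimates for each term on the right of \eqref{eq:halfwmwaveform}, after which a Picard iteration closes the argument and persistence of regularity yields smoothness.

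For the classical wave-maps-type term $u(\nabla u \cdot \nabla u - \partial_t u \cdot \partial_t u)$, the spatial dimension $n=4$ is high enough that pure Strichartz suffices and no null structure need be invoked: the outer $u$ factor is placed in $L^\infty_{t,x}$ via the Besov embedding $\dot{B}^{2,1}_2 \hookrightarrow L^\infty$ (valid in $n=4$ thanks to the $l^1$-summation across the critical Sobolev threshold), while the two derivative factors are paired in Strichartz norms. The quadratic non-local term $\Pi_{u_\perp}((-\Delta)^{1/2}u)(u\cdot(-\Delta)^{1/2}u)$ admits an analogous trilinear bound once $(-\Delta)^{1/2}$ is treated as a derivative of order one, with Littlewood--Paley trichotomy handling the LL, HL/LH, and HH frequency interactions and the $l^1$-summation absorbing the logarithmic losses that appear in the HL regime.

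The step I expect to be the main obstacle is the combined non-local expression $u \times (-\Delta)^{1/2}(u \times (-\Delta)^{1/2}u) - u \times (u \times (-\Delta) u)$: each summand individually carries two fractional derivatives of $u$ with no bounded outer factor to absorb regularity, so no direct H\"older trilinear bound closes at the critical level. The way through is a commutator cancellation: via the fractional Leibniz rule,
\[
(-\Delta)^{1/2}\bigl(u \times (-\Delta)^{1/2}u\bigr) = u \times (-\Delta) u + (-\Delta)^{1/2}u \times (-\Delta)^{1/2}u + R\bigl(u,(-\Delta)^{1/2}u\bigr),
\]
where $R$ denotes the Kato--Ponce-type fractional Leibniz remainder. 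The middle term vanishes identically as a cross product of a vector with itself, so the two problematic expressions recombine into $u \times R(u,(-\Delta)^{1/2}u)$. Paraproduct/bilinear Littlewood--Paley analysis shows that $R$ effectively distributes the half-derivative across both factors, buying back just enough regularity to place this contribution in the same trilinear Strichartz framework as the classical wave-maps term. Proving sharp $R$-estimates that survive the $l^1$ summation is the technical heart of the argument; once these, together with the cubic bounds of the previous paragraph, are in hand, the contraction argument and propagation of higher Sobolev regularity deliver the global smooth solution.
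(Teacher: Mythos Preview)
Your commutator observation for the third source term --- that $(-\Delta)^{1/2}u\times(-\Delta)^{1/2}u=0$, so the difference collapses to a fractional Leibniz remainder $R$ --- is correct, and indeed the paper's bilinear multiplier bound \eqref{singlefreq2} encodes precisely the gain this remainder enjoys when one inner factor sits at low frequency. The genuine gap lies in what you do \emph{not} use: the geometric constraint $u\cdot u=1$. In $n=4$ this identity is not optional for either non-local source term. The dangerous configuration is when an undifferentiated $u$-factor sits at very low frequency; it is then essentially the constant $p\in S^2$ and contributes only an $L^\infty$-bound of size one, so the expression degenerates from trilinear to \emph{bilinear} in derivatives of $u$. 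But in four space dimensions no product of two admissible Strichartz norms lands in $L^1_tL^2_x$: the H\"older constraint $\tfrac{1}{p_1}+\tfrac{1}{p_2}=1$, $\tfrac{1}{q_1}+\tfrac{1}{q_2}=\tfrac12$ forces $\sum_i\bigl(\tfrac{1}{p_i}+\tfrac{3}{2q_i}\bigr)=\tfrac{7}{4}>\tfrac32$, violating admissibility for at least one factor. Your single commutator gain $2^{k_1}$ does not close this deficit. Concretely, in the second source term the scalar $u\cdot(-\Delta)^{1/2}u$ with the inner $u$ at low frequency is exactly such a degenerate bilinear object, and in the third term the same issue recurs for $u\times R$ when the outer $u$ is at low frequency. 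The paper's remedy is to localize $u\cdot u=1$ to obtain the paradifferential identity $P_0(u_{<-10}\cdot u_{\geq -10})=-\tfrac12\,P_0(u_{\geq -10}\cdot u_{\geq -10})$, which converts the bad low--high product into a high--high one and restores a genuinely trilinear structure. For the hardest term (Lemma~\ref{lem:complicatedterm}) this must be combined with the algebraic identity $a\times(b\times c)=b(a\cdot c)-c(a\cdot b)$, after first expanding the non-local multiplier as a rapidly decaying Fourier series so that the relevant factors can be evaluated at a common spatial point and \eqref{eq:key} applies.

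Two further omissions. First, the wave-maps null form is not handled by pure Strichartz in $n=4$ either; the paper's $S$- and $N$-norms carry $X^{s,b}$ components precisely to exploit the $Q_0$ structure of $\partial^\alpha u\,\partial_\alpha u$, following \cite{Tat3}. Second, having solved the second-order system \eqref{eq:halfwmwaveform} you are not done: one must still verify that the resulting $u$ satisfies the first-order equation \eqref{eq:halfwm}. The paper does this by a separate energy argument for $X:=u_t-u\times(-\Delta)^{1/2}u$, showing that $\tilde E(t)=\tfrac12\|(-\Delta)^{1/4}X\|_{L^2_x}^2$ obeys a Gr\"onwall inequality with $\tilde E(0)=0$ and hence vanishes identically.
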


\begin{rem} We observe that exploiting the Gauge freedom for this problem as in \cite{T1}, \cite{ShaStr1}, it is likely that one can improve this to global regularity for data small in the critical Sobolev space $\dot{H}^{2}$. Going below spatial dimension $n = 4$ will certainly require new ideas, as the crucial $L_t^2 L_x^\infty$ Strichartz estimate is no longer available, and furthermore it is not a priori clear how the second and third term on the right hand side of \eqref{eq:halfwmwaveform} can be bounded using the commonly used function spaces, involving $X^{s,b}$ as well as null-frame spaces. It does seem, though, that the case $n = 3$ is accessible by the methods of this paper provided one restricts to {\it{radial data}}. 
\end{rem}

The main technical obstacle to overcome in passing from $n = 5$ to $n = 4$ dimensions is the absence of the $L_t^2 L_x^4$-Strichartz estimate, which is useful to handle certain {\it{quadratic interactions}}. This means that in order to control the source terms in \eqref{eq:halfwmwaveform}, absent a strong null-form structure as for the first term on the right, one has to exploit a genuine trilinear structure, which can be revealed upon exploiting the geometric structure, and specifically the fact that $u$ takes values in $S^2$. This is rendered somewhat cumbersome on account of the {\it{non-local character}} of the last two terms in \eqref{eq:halfwmwaveform}, see the proof of Lemma~\ref{lem:complicatedterm} below.  

\section{Technical preliminaries}

Here we recall the key tools used in \cite{KS}.  We let $P_k$, $k\in \Z$, be standard Littlewood-Paley multipliers on $\R^4$ (acting on the spatial variables), and furthermore, we denote by $Q_j$, $j\in \Z$, multipliers which localise a space-time function $F(t, x)$ to dyadic distance $\sim 2^j$ from the light cone $\big|\tau\big| = \big|\xi\big|$ on the Fourier side. Specifically, letting $\tilde{F}(\tau, \xi)$ denote the space time Fourier transform of $F$, while $\hat{f}(\xi)$ denotes the Fourier transform with respect to the spatial variables, and letting $\chi\in C^\infty_0(\R_+)$ a smooth cutoff  satisfying 
\[
\sum_{k\in \Z}\chi \left(\frac{x}{2^k}\right) = 1\,\forall x\in \R_+, 
\]
we set 
\[
\widehat{P_k f}(\xi) = \chi \Big(\frac{|\xi|}{2^k}\Big)\hat{f}(\xi),\,\widetilde{Q_j F} = \chi\Big(\frac{\big||\tau| - |\xi|\big|}{2^j}\Big)\tilde{F}(\tau, \xi). 
\]
Using these ingredients one can then define the following norms: 
\[
\big\|u\big\|_{\dot{X}^{2,\frac12,\infty}}: = \sup_{j\in \Z}2^{\frac{j}{2}}\big\|\nabla_x^{2}Q_j u\big\|_{L_{t,x}^2},\quad \big\|F\big\|_{\dot{X}^{1, -\frac12, 1}}: = \sum_{j\in \Z}2^{-\frac{j}{2}}\big\|\nabla_xQ_j F\big\|_{L_{t,x}^2}
\]
In addition to these, we rely on the classical Strichartz norms, which are the mixed type Lebesgue norms $\big\|\cdot\big\|_{L_t^p L_x^q}$, $\frac1p + \frac{3}{2q}\leq \frac{3}{4}$, $p\geq 2$. Call such pairs $(p,q)$ admissible.
\\
We shall freely use the fact that Fourier localisers of the form $P_k Q_j$ act in bounded fashion on spaces of the form $L_t^p L_x^2$, $1\leq p\leq \infty$, see e. g. \cite{T2}. 
We can now define a norm controlling our solutions as follows: 
\begin{equation*}\label{eq:Snorm}
\big\|u\big\|_{S}: = \sum_{k\in \Z}\sup_{(p,q)\,\text{admissible}}2^{(\frac1p + \frac{4}{q}-1)k}\big\|\nabla_{t,x}P_ku\big\|_{L_t^p L_x^q} + \big\|\nabla_{t,x}P_k u\big\|_{\dot{X}^{1,\frac12,\infty}} =:\sum_{k\in \Z}\big\|P_k u\big\|_{S_k}.
\end{equation*}
We also introduce 
\begin{equation*}\label{eq:Nnorm}
\big\|F\big\|_{N}: = \sum_{k\in \Z}\big\|P_kF\big\|_{L_t^1\dot{H}^{1} + \dot{X}^{1,-\frac12,1}},
\end{equation*}
as well as the norms 
\[
\big\|u\big\|_{\dot{B}^{r,1}_2}: = \sum_{k\in \Z}\big\|P_ku\big\|_{\dot{H}^r}
\]

Then the following inequality is by now completely standard, see e. g. \cite{Kri}, \cite{T1}, \cite{Tat3}: 
\begin{prop}\label{prop:linestimates}
\begin{equation*}\label{eq:energyinequality}
\big\| u\big\|_S\lesssim \big\|u[0]\big\|_{\dot{B}^{2,1}_2\times \dot{B}^{1,1}_2} + \big\|\Box u\big\|_{N}. 
\end{equation*}
\end{prop}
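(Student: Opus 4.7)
The proof proceeds frequency-by-frequency. Since $\|\cdot\|_S$ and $\|\cdot\|_N$ are $\ell^1$-sums over dyadic frequencies, it suffices to establish the single-frequency estimate
\[
\|P_k u\|_{S_k} \lesssim \|P_k u[0]\|_{\dot{H}^2 \times \dot{H}^1} + \|P_k \Box u\|_{L_t^1 \dot{H}^1 + \dot{X}^{1,-\frac12,1}}
\]
uniformly in $k\in\Z$. I would split $P_k u = v + w$, where $v$ is the free wave with initial data $P_k u[0]$ and $w$ is the Duhamel term with zero Cauchy data and forcing $P_k \Box u$, and then treat $v$ and $w$ separately.

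For the free wave $v$, the Strichartz portion of $\|v\|_{S_k}$ is the classical frequency-localized Strichartz estimate on $\R^{4+1}$, immediately controlled by $\|P_k u[0]\|_{\dot{H}^2 \times \dot{H}^1}$. The $\dot{X}^{1,\frac12,\infty}$-portion vanishes: the space-time Fourier transform of $v$ lives on the cone $|\tau|=|\xi|$, on which every multiplier $Q_j$ (whose symbol is supported in the annulus $\big||\tau|-|\xi|\big|\sim 2^j$ with $\chi(0)=0$) annihilates $v$.

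The Duhamel term $w$ is handled on the Fourier side, exploiting that on a $P_k Q_j$-piece $|\tau^2 - |\xi|^2|\sim 2^{k+j}$ in the main range $j\le k$ (with an analogous or larger gain when $j>k$), so
\[
\|P_k Q_j w\|_{L^2_{t,x}} \lesssim 2^{-k-j}\,\|P_k Q_j \Box u\|_{L^2_{t,x}}.
\]
Plugging the $\dot{X}^{1,-\frac12,1}$-piece of the forcing into this inequality and summing in $j$ yields the $\dot{X}^{1,\frac12,\infty}$-component of $\|w\|_{S_k}$ directly, since the $\ell^1_j$-weight of the forcing norm pairs perfectly against the $\ell^\infty_j$-weight on the left. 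For the $L_t^1 \dot{H}^1$-piece of the forcing, combine the same symbol bound with the auxiliary Hausdorff--Young type inequality
\[
\|P_k Q_j F\|_{L^2_{t,x}} \lesssim 2^{j/2}\,\|P_k F\|_{L_t^1 L_x^2},
\]
which follows from Bessel's inequality applied to dyadic $\tau$-slices of length $\sim 2^j$ of the time Fourier transform of $F(\cdot,\xi)$, followed by Plancherel in $\xi$; this again gives the needed $\dot{X}^{1,\frac12,\infty}$-bound.

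For the Strichartz component of $\|w\|_{S_k}$, the $L_t^1 \dot{H}^1$-forcing contribution is the standard inhomogeneous Strichartz estimate on $\R^{4+1}$. The $\dot{X}^{1,-\frac12,1}$-forcing contribution is treated via the modulation decomposition $\Box u = \sum_j Q_j(\Box u)$: for each $j$ the piece $\Box^{-1} Q_j(P_k\Box u)$ is represented as a superposition of modulated free waves through its $\tau$-slice decomposition, to which the classical frequency-localized Strichartz inequality applies; the resulting $2^{-j/2}$-factor is absorbed into the $\ell^1_j$-weight of the $\dot{X}^{1,-\frac12,1}$-norm. The main technical obstacle I anticipate is precisely this last step, where one must reconcile the modulation-level decomposition of the $\dot{X}^{1,-\frac12,1}$-forcing with the $k$-weighted Strichartz exponents $2^{(1/p + 4/q - 1)k}$ embedded in the definition of $\|\cdot\|_{S_k}$, while keeping constants uniform in both $j$ and $k$ and correctly handling the auxiliary regime $j>k$.
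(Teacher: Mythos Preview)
The paper does not prove this proposition at all: it simply records it as ``by now completely standard'' and defers to the references \cite{Kri}, \cite{T1}, \cite{Tat3}. Your outline is a correct sketch of the argument one finds in those references---the frequency-by-frequency reduction, the free/Duhamel splitting, the symbol division $|\tau^2-|\xi|^2|\sim 2^{k+j}$ on $P_kQ_j$, the $L^1_t\to L^2_\tau$ Bernstein bound $\|P_kQ_jF\|_{L^2_{t,x}}\lesssim 2^{j/2}\|P_kF\|_{L^1_tL^2_x}$, and the representation of $\dot X^{1,-\frac12,1}$-forcing as superpositions of modulated free waves---are all standard ingredients.

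One small caveat worth flagging for yourself: the division-by-symbol step $\|P_kQ_j w\|_{L^2_{t,x}}\lesssim 2^{-k-j}\|P_kQ_j\Box u\|_{L^2_{t,x}}$ is literally correct for the multiplier inverse $\Box^{-1}$ on $\R^{1+4}$, not for the genuine Duhamel solution with zero Cauchy data at $t=0$. The two differ by a free wave, which (as you observed) is annihilated by every $Q_j$, so the conclusion is unaffected; but in a full write-up this free-wave correction must also be controlled in the Strichartz component, and that is where one typically invokes the energy estimate for the auxiliary data generated by the time-truncation. The references you would be reproducing handle this explicitly.
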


We shall also frequently rely on {\it{Bernstein's inequality}}. In our context, we need that if $1\leq p\leq q\leq \infty$, we have 
\[
\| P_k f\|_{L^q(\R^n)}\lesssim 2^{(\frac{n}{p} - \frac{n}{q})k}\|P_k f\|_{L^p(\R^n)}.
\]

\section{The main estimates}

The following estimate will be the key towards finding solutions of the wave equation \eqref{eq:halfwmwaveform} by means of iteration. The key here is particularly the estimate \eqref{eq:multilin3}, whose proof requires a refinement of the analogous Prop. 4.1 in \cite{KS}. 

\begin{prop}\label{prop:multilinear} Assume that $u$ takes values in $S^2$ and converges to $p\in S^2$ at spatial infinity. Then using the norms $\big\|\cdot\big\|_{S}, \big\|\cdot\big\|_{N}$ introduced in the previous section, we have for suitable $\sigma>0$ the bounds 
\begin{equation}\label{eq:multilin1}
\big\|P_k\big[u(\nabla u\cdot\nabla u - \partial_t u\cdot\partial_t u)\big]\big\|_{N}\lesssim (1+\big\|u\big\|_{S})\big\|u\big\|_{S}\big(\sum_{k_1\in \Z}2^{-\sigma|k-k_1|}\big\|P_{k_1}u\big\|_{S_{k_1}}\big)
\end{equation}
Furthermore, if $\tilde{u}$ maps into a small neighbourhood of $S^2$ and $\big\|\tilde{u}\big\|_{S}\lesssim 1$, we have the similar bound 
\begin{equation}\label{eq:multilin2}
\big\|P_k\big(\Pi_{\tilde{u}_{\perp}}\big((-\triangle)^{\frac{1}{2}}u\big)(u\cdot (-\triangle)^{\frac{1}{2}}u)\big)\big\|_{N}\lesssim \prod_{v = u,\tilde{u}} (1+\big\|v\big\|_{S})\big\|u\big\|_{S}\big(\sum_{k_1\in \Z}2^{-\sigma|k-k_1|}\big\|P_{k_1}u\big\|_{S_{k_1}}\big)
\end{equation}
as well as 
\begin{equation}\label{eq:multilin3}\begin{split}
&\big\|P_k\big(\Pi_{\tilde{u}_{\perp}}\big[u \times (-\triangle)^{\frac{1}{2}}(u \times (-\triangle)^{\frac{1}{2}}u) - u \times(u \times (-\triangle)u)\big]\big)\big\|_{N}\\
&\lesssim \prod_{v = u,\tilde{u}} (1+\big\|v\big\|_{S})\big\|u\big\|_{S}\big(\sum_{k_1\in \Z}2^{-\sigma|k-k_1|}\big\|P_{k_1}u\big\|_{S_{k_1}}\big).
\end{split}\end{equation}
We also have corresponding difference estimates: assuming that $u^{(j)}, j = 1,2$, map into $S^2$, while $\tilde{u}^{(j)}$ map into a small neighbourhood of $S^2$ and with $\lim_{N\rightarrow +\infty}P_{<-N}\tilde{u}^{(j)} = p\in S^2$, then using the notation 
\[
\triangle_{1,2}F^{(j)} = F^{(1)} - F^{(2)}
\]
we have 
\begin{equation}\label{eq:multilin4}\begin{split}
&\big\|\triangle_{1,2}P_k\big[u^{(j)}(\nabla u^{(j)}\cdot\nabla u^{(j)} - \partial_t u^{(j)}\cdot\partial_t u^{(j)})\big]\big\|_{N}\\&\lesssim (1+\max_j\big\|u^{(j)}\big\|_{S})(\max_j\big\|u^{(j)}\big\|_{S})\big(\sum_{k_1\in \Z}2^{-\sigma|k-k_1|}\big\|P_{k_1}u^{(1)} - P_ku^{(2)}\big\|_{S_{k_1}}\big)\\
& + (1+\max_j\big\|u^{(j)}\big\|_{S})(\big\|u^{(1)} - u^{(2)}\big\|_{S})\big(\max_j\sum_{k_1\in \Z}2^{-\sigma|k-k_1|}\big\|P_{k_1}u^{(j)}\big\|_{S_{k_1}}\big)\\
\end{split}\end{equation}
and similarly 
\begin{equation}\label{eq:multilin5}\begin{split}
&\big\|P_k\triangle_{1,2}\big(\Pi_{\tilde{u}^{(j)}_{\perp}}\big((-\triangle)^{\frac{1}{2}}u^{(j)}\big)(u^{(j)}\cdot (-\triangle)^{\frac{1}{2}}u^{(j)})\big)\big\|_{N}\\&\lesssim \max_j\prod_{v = u^{(j)},\tilde{u}^{(j)}} (1+\big\|v\big\|_{S})\big\|u^{(j)}\big\|_{S}\big(\sum_{k_1\in \Z}2^{-\sigma|k-k_1|}\big\|P_{k_1}u^{(1)} - P_{k_2}u^{(2)} \big\|_{S_{k_1}}\big)\\
& + \max_j\prod_{v = u^{(j)},\tilde{u}^{(j)}} (1+\big\|v\big\|_{S})\big\|u^{(1)}- u^{(2)}\big\|_{S}\big(\max_j\sum_{k_1\in \Z}2^{-\sigma|k-k_1|}\big\|P_{k_1}u^{(j)}\big\|_{S_{k_1}}\big)\\
& +  \max_j(1+\big\|u^{(j)}\big\|_{S})\big\|\tilde{u}^{(1)}- \tilde{u}^{(2)}\big\|_{S}\big(\max_j\sum_{k_1\in \Z}2^{-\sigma|k-k_1|}\big\|P_{k_1}u^{(j)}\big\|_{S_{k_1}}\big)\\
\end{split}\end{equation}
The analogous difference estimate for \eqref{eq:multilin3} is similar. 

\end{prop}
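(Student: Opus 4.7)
My plan is to prove the three forward estimates \eqref{eq:multilin1}--\eqref{eq:multilin3} by a paradifferential (Littlewood--Paley) decomposition of each input, followed by a case analysis over the resulting frequency configurations. In each case I distribute the inputs among the Strichartz norms encoded in $\|\cdot\|_{S}$ -- the admissible exponents available in $n=4$ include $L^4_tL^4_x$, $L^\infty_tL^2_x$ and $L^2_tL^\infty_x$ up to a small Bernstein loss -- and dump pieces of large output modulation into the $\dot X^{1,-1/2,1}$ component of $N$. The exterior factor $u$ is handled as a Moser-type bounded multiplier, producing the harmless $1+\|u\|_S$ prefactor, and $\Pi_{\tilde u_\perp}w=w-(w\cdot \tilde u)\tilde u$ is expanded so that $\tilde u$ enters as just another bounded factor. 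The geometric weight $\sum_{k_1}2^{-\sigma|k-k_1|}$ arises from the usual paradifferential bookkeeping: a single internal frequency is singled out and the remaining factors are summed geometrically across the high-high, high-low and low-high regimes.

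The quadratic factor in \eqref{eq:multilin1} is the classical wave-map null form $-\partial^\alpha u\cdot \partial_\alpha u$, and I would carry over the argument of Proposition 4.1 in \cite{KS}. The only adjustment from $n\geq 5$ is that the high-high interaction can no longer appeal to $L^2_tL^4_x$; the replacement is the null-form identity $2\partial^\alpha u\cdot\partial_\alpha v = \Box(uv) - u\,\Box v - v\,\Box u$, which sheds a wave operator onto one input and enables the $2^{|j|/2}$-gain supplied by the $\dot X^{s,b}$ ingredient of $S$, with the residual free product closed by $L^4_tL^4_x\cdot L^4_tL^4_x\cdot L^\infty_tL^2_x$ (or a variant when the derivatives concentrate on the external $u$). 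For \eqref{eq:multilin2} one expands $\Pi_{\tilde u_\perp}$ and distributes the two half-derivatives across two of the $u$-factors, placing each into $\|\nabla_{t,x}P_{k_i}u\|_{L^2_tL^\infty_x}$, which after frequency summation lands the output in $L^1_tL^2_x\hookrightarrow L^1_t\dot H^1$ against the outer $P_k$.

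The step I expect to be the main obstacle is \eqref{eq:multilin3}, since neither non-local term carries an obvious null structure on its own. The decisive observation is that, because $\bigl((-\Delta)^{1/2}\bigr)^2 = -\Delta$, one has the algebraic identity
\[
(-\Delta)^{\frac12}\bigl(u\times (-\Delta)^{\frac12}u\bigr) \;-\; u\times (-\Delta) u \;=\; \bigl[(-\Delta)^{\frac12},\,u\,\times\bigr]\bigl((-\Delta)^{\frac12}u\bigr),
\]
so that the bracket in \eqref{eq:multilin3} equals exactly $u\times\bigl\{\bigl[(-\Delta)^{\frac12},u\,\times\bigr]\bigl((-\Delta)^{\frac12}u\bigr)\bigr\}$. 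The commutator has Fourier symbol $|\xi|-|\eta|$ acting on $\hat u(\xi-\eta)\times\hat u(\eta)$, and since $\bigl||\xi|-|\eta|\bigr|\lesssim \min(|\xi-\eta|,\,|\xi|+|\eta|)$, it behaves as a first-order Coifman--Meyer operator: in the low-high and high-low regimes one full derivative is effectively transferred from the outer $(-\Delta)^{1/2}$ onto the low-frequency copy of $u$, while in the high-high regime the cap $|\xi|+|\eta|$ is compensated by the low output frequency enforced by $P_k$. Either way, the expression is reduced to a genuine trilinear quantity in which the two derivatives are split $1+1+0$ across the three copies of $u$, and from this point it closes with the same Strichartz package used for \eqref{eq:multilin2}.

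Finally, the difference bounds \eqref{eq:multilin4}, \eqref{eq:multilin5}, and the stated analogue of \eqref{eq:multilin3}, follow by telescoping each multilinear expression into pieces in which exactly one factor is replaced by $u^{(1)}-u^{(2)}$ or $\tilde u^{(1)}-\tilde u^{(2)}$ and then invoking the very same linear estimates above on each piece. No new analytic input beyond the three forward estimates is required.
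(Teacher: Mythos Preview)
Your commutator identity for \eqref{eq:multilin3} is the right first move, and the paper uses essentially the same symbol gain via its multiplier lemma. However, your argument for both \eqref{eq:multilin2} and \eqref{eq:multilin3} has a genuine gap: you never invoke the geometric constraint $u\cdot u=1$, and without it the estimates do not close. Concretely, after your reductions each expression is trilinear with derivative count $1+1+0$, and in the configuration where the undifferentiated copy of $u$ sits at very low frequency (so that $P_{<k-10}u$ is essentially the constant $p$), that factor can only be placed in $L^\infty_{t,x}$. This forces the two differentiated factors together into $L^1_tL^2_x$, hence one of them into $L^2_tL^2_x$ or (after H\"older) $L^1_tL^\infty_x$; neither is a Strichartz norm in $n=4$, and $\|\nabla P_\ell u\|_{L^2_tL^\infty_x}\lesssim 2^{\ell/2}\|P_\ell u\|_{S_\ell}$ diverges when summed over the remaining high frequency. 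The same obstruction already appears in your sketch for \eqref{eq:multilin2} when bounding $P_0\bigl(u_{<-10}\cdot(-\Delta)^{1/2}u_{\sim 0}\bigr)$.

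The missing ingredient is the paradifferential consequence of $u\cdot u=1$, namely
\[
P_m\bigl(P_{<k}u\cdot P_{\ge k}u\bigr)=-\tfrac12\,P_m\bigl(P_{\ge k}u\cdot P_{\ge k}u\bigr)\qquad(m\ge k+3),
\]
which converts the dangerous low-frequency factor into a high-high product and makes $u\cdot(-\Delta)^{1/2}u$ genuinely quadratic in the non-constant part of $u$. For \eqref{eq:multilin3} one needs an additional step you omit: after the commutator reduction the outer $u\times(\cdot)$ still carries an undifferentiated $u$, and one must expand $a\times(b\times c)=b(a\cdot c)-c(a\cdot b)$ to produce scalar products $u\cdot u$ and $u\cdot(-\Delta)^{1/2}u$ on which the identity above can act. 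There is a further technicality you should anticipate: because $(-\Delta)^{1/2}$ is nonlocal, the commutator expansion (via Fourier series of the symbol) evaluates the various copies of $u$ at translated points $x+c_1m$, $x+c_2p$, so $u(x)\cdot u(x)=1$ cannot be applied directly; one writes $u_{<\ell}(x)=u_{<\ell}(x+q)-\int_0^1 q\cdot\nabla u_{<\ell}(x+qs)\,ds$ and absorbs the gradient correction. Your telescoping for the difference estimates is fine in spirit, but note it must be performed \emph{after} the geometric identity has been applied, since $u^{(1)}\cdot u^{(2)}$ is not identically $1$.
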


The proof of the estimates for the wave maps term, \eqref{eq:multilin1},  \eqref{eq:multilin4},  is  completely standard, see e. g. \cite{Tat3}, and the proof of Prop. 4.1 in \cite{KS}. Therefore we shall only show the other estimates  here. Note that for these estimates we do not need the $X^{s,b}$ part of the norms $\norm{\cdot}_S$ and $\norm{\cdot}_N$ but can get by using only Strichartz estimates. In particular, we do not need the frequency localizers $Q_j$. For the frequency localizers $P_k$ we also use the notation $u_k:=P_k u$.

A technical tool in the proof of this proposition is the following lemma, analogous to Lemma 3.1 used in \cite{KS}: 

\begin{lem}\label{lem:KS}Assume that $\|\cdot\|_{X, Y, Z}$ are translation invariant norms on $C^{\infty}_c(\R^n)$ satisfying an inequality 
\[
\big\| uv\big\|_{Z}\leq \big\|u\big\|_{X}\big\|v\big\|_{Y}. 
\]
Let 
$$(F_{k_1 k_2}(u,v))(x)= \int_{\R^n \times \R^n} m(\xi, \eta) \hat u_{k_1} \hat v_{k_2} e^{ix \cdot (\xi + \eta)}d(\xi,\eta)$$
where $k_1,k_2 \in \Z$ and the multiplier $m(\xi, \eta)$ is $C^{\infty}$ on the support of $\chi_{k_1}(\xi) \chi_{k_2}(\eta)$ and on this set satisfies 
$$
 | (2^{k_1}\nabla_\xi)^i (2^{k_2}\nabla_\eta)^j m(\xi,\eta)| \lesssim_{i,j} \gamma_{k_1 k_2}\, \forall i,j\ge 0.$$
Then 
$$\norm{F_{k_1 k_2}(u,v)}_{Z} \lesssim \gamma_{k_1 k_2} \big\|u_{k_1}\big\|_{X}\big\|v_{k_2}\big\|_{Y}$$
where the implied constant depends on the size of finitely many derivatives of $m$.

\end{lem}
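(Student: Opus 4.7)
The plan is to reduce the lemma to the scalar product estimate $\|uv\|_Z\le\|u\|_X\|v\|_Y$ by representing the bilinear multiplier operator as an absolutely convergent sum of pointwise products of translates of $u_{k_1}$ and $v_{k_2}$, to which the translation invariance of the three norms applies directly. The standard tool for this is a tensor-product Fourier series expansion of the multiplier on a large box.

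First I would introduce slightly enlarged smooth cutoffs $\til\chi_{k_i}$, each equal to $1$ on the support of $\chi_{k_i}$ and supported in a slightly larger dyadic annulus of size $\sim 2^{k_i}$. Since $\hat u_{k_1}$ and $\hat v_{k_2}$ are already localised, the operator $F_{k_1 k_2}$ is unchanged if we replace $m$ by the globally defined symbol $M(\x,\y):=m(\x,\y)\til\chi_{k_1}(\x)\til\chi_{k_2}(\y)$, extended by zero outside the annuli. By construction $M$ satisfies $|(2^{k_1}\na_\x)^i(2^{k_2}\na_\y)^j M|\lec_{i,j}\gamma_{k_1 k_2}$ uniformly on $\R^n\times\R^n$. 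Next I would expand $M$ in a Fourier series on a box $B_1\times B_2$ with $B_i=[-C2^{k_i},C2^{k_i}]^n$ chosen large enough to contain $\supp M$, obtaining
\[
M(\x,\y)=\gamma_{k_1 k_2}\sum_{\vec n_1,\vec n_2\in\Z^n}c_{\vec n_1\vec n_2}\,e^{i\pi\vec n_1\cdot\x/(C2^{k_1})}\,e^{i\pi\vec n_2\cdot\y/(C2^{k_2})}
\]
on that box. Repeated integration by parts in the formula for $c_{\vec n_1\vec n_2}$, using the normalized derivative bounds on $M/\gamma_{k_1 k_2}$, yields $|c_{\vec n_1\vec n_2}|\lec_N\la\vec n_1\ra^{-N}\la\vec n_2\ra^{-N}$ for every $N$, with constants depending only on finitely many seminorms of $m$.

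Substituting this expansion into the defining integral of $F_{k_1 k_2}(u,v)$ and recognising each exponential factor as a spatial translation produces the pointwise identity
\[
F_{k_1 k_2}(u,v)(x)=\gamma_{k_1 k_2}\sum_{\vec n_1,\vec n_2}c_{\vec n_1\vec n_2}\,u_{k_1}\!\big(x+\tfrac{\pi\vec n_1}{C2^{k_1}}\big)\,v_{k_2}\!\big(x+\tfrac{\pi\vec n_2}{C2^{k_2}}\big).
\]
Applying the triangle inequality in $\|\cdot\|_Z$, then invoking the hypothesised product estimate $\|uv\|_Z\le\|u\|_X\|v\|_Y$ on each translated pair together with translation invariance of $\|\cdot\|_X$ and $\|\cdot\|_Y$, gives
\[
\|F_{k_1 k_2}(u,v)\|_Z\le\gamma_{k_1 k_2}\,\|u_{k_1}\|_X\,\|v_{k_2}\|_Y\sum_{\vec n_1,\vec n_2}|c_{\vec n_1\vec n_2}|,
\]
and the series converges by the rapid decay of the Fourier coefficients. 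The only mildly delicate point is arranging the cutoffs $\til\chi_{k_i}$ so that they respect the normalised derivative scale $2^{-k_i}\na$ (and hence contribute only absolute constants on top of $\gamma_{k_1 k_2}$); once this is done, the integration-by-parts step must be carried out in $\x$ and $\y$ independently to extract decay in $\vec n_1$ and $\vec n_2$ separately. Everything else is bookkeeping.
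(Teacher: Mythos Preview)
Your argument is correct and is essentially the same as the paper's: expand the multiplier as a Fourier series on a box of the appropriate dyadic scale, convert the bilinear operator into an absolutely convergent superposition of translated pointwise products, and then apply the product hypothesis together with translation invariance. The only cosmetic difference is that you make the smooth extension step (via the enlarged cutoffs $\til\chi_{k_i}$) explicit, whereas the paper tacitly absorbs this into the phrase ``Fourier series on the support of $\chi_{k_1}(\xi)\chi_{k_2}(\eta)$''.
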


\begin{proof} As in \cite{KS}, write the multiplier $m(\xi,\eta)$ as a Fourier series on the support of $\chi_{k_1}(\xi) \chi_{k_2}(\eta)$,
$$m(\xi,\eta)=\sum_{m,p\in\Z^n} a_{mp} e^{i (2^{-k_1}\xi\cdot m + 2^{-k_2}\eta \cdot p)}.$$
Then $(F_{k_1 k_2}(u,v))(x)=\sum_{m,p\in\Z^n} a_{mp} u_{k_1}(x+2^{-k_1}m) v_{k_2}(x+2^{-k_2} p)$. By taking $N$th order derivatives of $m$ we see that the Fourier coefficients $a_{mp}$ satisfy
$ 
 |a_{mp}| \lesssim_N \gamma_{k_1 k_2} (|m|+|p|)^{-N}$.
Hence $\sum_{m,p\in \Z^n} |a_{mp}| \lesssim \gamma_{k_1 k_2}$ and the estimate for $F_{k_1 k_2}$ follows. 
\end{proof}

Applying Lemma \ref{lem:KS} to the multipliers
$$m(\xi,\eta)=\chi_0(\xi +\eta)(|\xi +\eta|-|\eta|) \quad\text{resp.} \quad m(\xi,\eta)=\chi_0(\xi +\eta)|\eta|(|\xi +\eta|-|\eta|)$$ 
we obtain for $k_1,k_2 \lesssim 1$:
\begin{equation}\label{singlefreq}
\big\|P_0 \left( (-\triangle)^{\frac12}(u_{k_1}\cdot u_{k_2} ) - u_{k_1}\cdot  (-\triangle^{\frac12}u_{k_2})\right) \big\|_{Z}\lesssim2^{k_1} \big\| u_{k_1}\big\|_{X}\big\|u_{k_2}\big\|_{Y}
\end{equation}
\begin{equation}\label{singlefreq2}
\big\|P_0\left((-\triangle)^{\frac12}(u_{k_1}\cdot (-\triangle)^{\frac12}u)-u_{k_1}\cdot (\triangle u_{k_2}) \right)\big\|_{Z}\lesssim 2^{k_1+k_2} \big\| u_{k_1}\big\|_{X}\big\|u_{k_2}\big\|_{Y}.
\end{equation}


To establish the bounds \eqref{eq:multilin2}, \eqref{eq:multilin3}, we shall rely on a simple paradifferential identity which derives from the crucial feature that 
\[
u\cdot u = 1,
\]
which gets localised to 
\[
P_0(u\cdot u) = 0.
\]
We perform a frequency decomposition: 
\begin{align*}
P_0(u\cdot u) &= P_0(P_{<-10}u\cdot u) + P_0(P_{\geq -10}u\cdot u)\\
& = P_0(P_{<-10}u\cdot P_{\geq -10} u) + P_0(P_{\geq -10}u\cdot P_{<-10}u)\\& + P_0(P_{\geq -10}u\cdot P_{\geq -10}u)\\
& = 2P_0(P_{<-10}u\cdot P_{\geq -10} u) + P_0(P_{\geq -10}u\cdot P_{\geq -10}u).
\end{align*}
It follows that we have the identity 
\begin{equation}\label{eq:key}
P_0(P_{<-10}u\cdot P_{\geq -10} u) = -\frac12 P_0(P_{\geq -10}u\cdot P_{\geq -10}u).
\end{equation}
More generally the same reasoning leads to
\begin{equation}\label{eq:key_general}
P_{m} (P_{<k}u\cdot P_{\geq k} u) = -\frac12 P_{m} (P_{\geq k}u\cdot P_{\geq k}u).
\end{equation}
for any $m \ge k+3$.
\\

This can be used as follows. Summing \eqref{singlefreq} over $k_1<-10,k_2\in [-2,2]$ and using Equation \eqref{eq:key} we obtain 
\begin{align}\label{eq:key2}\begin{split}
\big\|P_0&(u_{<-10}\cdot (-\triangle)^{\frac12}u_{})\big\|_{Z} = \big\|P_0(u_{<-10}\cdot (-\triangle)^{\frac12}u_{\geq -10})\big\|_{Z} \\ &\le
\norm{P_0(-\triangle)^{\frac12}(u_{< -10}u\cdot u_{\geq -10})}_Z +
 \big\|P_0((-\triangle)^{\frac12}(u_{< -10}\cdot u_{\geq -10}) -  u_{<-10}\cdot (-\triangle)^{\frac12}u_{\geq -10})\big\|_{Z}
\\&  \lesssim\norm{P_0(-\triangle)^{\frac12}(u_{\geq -10}\cdot u_{\geq -10})}_Z+  \sum_{\substack{k_1<-10,\\ k_2\in[-2,2]}} 2^{k_1}\big\|  u_{k_1}\big\|_{X}\big\|u_{k_2}\big\|_{Y}.
\end{split}
\end{align}
Similarly, combining \eqref{singlefreq} and \eqref{singlefreq2} we obtain
\begin{align}\label{eq:key3}\begin{split}
\big\|P_0&(u_{<-10}\cdot (-\triangle u_{}))\big\|_{Z}   \lesssim\norm{P_0(-\triangle)(u_{\geq -10}\cdot u_{\geq -10})}_Z+  \sum_{\substack{k_1<-10,\\ k_2\in[-2,2]}} 2^{k_1+k_2}\big\|  u_{k_1}\big\|_{X}\big\|u_{k_2}\big\|_{Y}.
\end{split}
\end{align}
Rescaling Equations \eqref{eq:key2} and \eqref{eq:key3} for $Z=L_x^p$, $X=L_x^{p_1}$, $Y= L_x^{p_2}$ (where $\frac{1}{p}= \frac{1}{p_1}+\frac{1}{p_2}$) we get:
\begin{align}\label{rescaled}
\big\|P_k\big(u_{<k-10}\cdot (-\triangle)^{\frac12}u\big)\big\|_{L_x^p}
&\lesssim \big\|P_k (-\triangle)^{\frac12}\big(u_{\geq k-10}\cdot u_{\geq k-10}\big)\big\|_{L_x^{p}} +   \sum_{\substack{k_1<k-10\\ k_2\in [k-2,k+2]}} 2^{k_1} \big\| u_{k_1} \big\|_{L_x^{p_1}}\big\|u_{k_2}\big\|_{L_x^{p_2}} 
\\ \label{rescaled2}
\big\|P_k(u_{<k-10}\cdot (-\triangle u_{}))\big\|_{L_x^p}   &\lesssim\norm{P_k(-\triangle)(u_{\geq k-10}\cdot u_{\geq k-10})}_{L_x^p}+  \sum_{\substack{k_1<k-10,\\ k_2\in[k-2,k+2]}} 2^{k+k_1}\big\|  u_{k_1}\big\|_{L_x^{p_1}}\big\|u_{k_2}\big\|_{L_x^{p_2}}.
\end{align}
for all $k\in \Z$. 
\\

The proof of the preceding proposition will be accomplished in a sequence of lemmas: 

\begin{lem}\label{lem:multilin2} Let $u, \tilde{u}$ be as in the statement of the proposition. Then we have the bound 
\[
\big\|P_0\big(\Pi_{\tilde{u}^{\perp}}\big((-\triangle)^{\frac12}u\big)\big(u\cdot (-\triangle)^{\frac12}u\big)\big)\big\|_{L_t^1 \dot{H}_x^1}\lesssim \sum_{k\in Z}2^{-\sigma |k|}\|P_ku\|_{S}(1+\|\tilde{u}\|_{S})\big\|u\big\|_{S}^2
\]
for suitable $\sigma>0$. 
In particular, we have the bound 
\[
\big\|\Pi_{\tilde{u}^{\perp}}\big((-\triangle)^{\frac12}u\big)\big(u\cdot (-\triangle)^{\frac12}u\big)\big\|_{L_t^1 \dot{B}^{1,1}_2}\lesssim (1+\|\tilde{u}\|_{S})\big\|u\big\|_{S}^3. 
\]
\end{lem}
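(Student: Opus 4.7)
The plan is to exploit the constraint $u\cdot u = 1$, frequency-localized via \eqref{rescaled}, so that the scalar $B := u\cdot (-\triangle)^{\frac12}u$ behaves effectively as a high-high paraproduct of $u$ modulo commutator errors controlled by Lemma~\ref{lem:KS}. Without this cancellation, low-frequency pieces of $B$ would be too large to pair with $\Pi_{\tilde u^\perp}((-\triangle)^{\frac12}u)$ in the absence of the $L^2_tL^4_x$ Strichartz estimate. I first split $\Pi_{\tilde u^\perp}((-\triangle)^{\frac12}u) = (-\triangle)^{\frac12}u - (\tilde u\cdot(-\triangle)^{\frac12}u)\,\tilde u$; the second summand is handled by the same argument applied once more to the inner scalar $\tilde u\cdot(-\triangle)^{\frac12}u$, producing the extra factor $(1+\|\tilde u\|_S)$.

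For the main piece $P_0\bigl((-\triangle)^{\frac12}u \cdot B\bigr)$, I paraproduct-decompose in the two factors by frequencies $2^k$ and $2^m$ respectively, distinguishing three regimes: (i) $k\sim 0$, $m\ll 0$; (ii) $k\ll 0$, $m\sim 0$; (iii) $k\sim m\gg 0$. Since $P_0$ localizes the output to frequency $\sim 1$, the $L^1_t\dot H^1_x$ norm equals $L^1_tL^2_x$, which I estimate by H\"older in time with the admissible Strichartz pair $(2,\infty)$ available in $n=4$: namely $\|(-\triangle)^{\frac12}P_ku\|_{L^2_tL^\infty_x}\lesssim 2^{k/2}\|P_ku\|_{S_k}$, paired with an $L^2_tL^2_x$ bound on the other factor. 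The third $u$-factor hidden inside $B$ is controlled via Strichartz at the admissible pair $(4,4)$. The distinguished frequency $k$ in the output sum $2^{-\sigma|k|}\|P_ku\|_S$ is assigned to the lowest-frequency $u$-occurrence among the three.

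In regime (i) the key step is to bound $\|P_mB\|_{L^2_tL^2_x}$ via \eqref{rescaled}: the low-high paraproduct part of $B$ is replaced by the $P_m$-localized high-high expression $P_m(-\triangle)^{\frac12}(u_{\geq m-10}\cdot u_{\geq m-10})$ plus commutator errors from Lemma~\ref{lem:KS}, which carry an explicit factor $2^{k_1-m}$ and sum trivially. Expanding the high-high contribution and pairing $u_{k_1}\cdot u_{k_2}$ in $L^4_tL^4_x$ yields the exponential frequency-gap decay $2^{-\sigma(k-m)}$. Regime (ii) is simpler: Bernstein on the low-frequency factor $(-\triangle)^{\frac12}P_ku$ provides the gain $2^{k/2}$ for $k\ll 0$, while $B_{\sim 0}$ is controlled by the same analysis specialized to $m\sim 0$.

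The main obstacle is regime (iii), where all three $u$-factors carry comparable high frequency $2^k\gg 1$ while the output is at frequency $\sim 1$, so a genuine decay $2^{-\sigma k}$ must be extracted from $P_0$-orthogonality. Here Bernstein converts the $L^2_x$ target into a more manageable $L^p_x$ bound at bounded cost (since the output frequency is $\sim 1$), which combined with the $(-\triangle)^{\frac12}$-gains $2^k$ and the negative-power weights $2^{-k}$ in the $S_k$-norms of $\nabla u$ yields the net decay $2^{-\sigma k}$ for some $\sigma >0$. The Besov bound $\|\cdot\|_{L^1_t\dot B^{1,1}_2}\lesssim(1+\|\tilde u\|_S)\|u\|_S^3$ then follows by rescaling the $P_0$ estimate to each $P_m$, summing in $m\in\Z$, and applying Schur's test on the convolutional decay $\sum_m 2^{-\sigma|k-m|}\lesssim 1$.
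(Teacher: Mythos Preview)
Your overall plan---use the constraint $u\cdot u=1$ via \eqref{rescaled} to tame the scalar $B=u\cdot(-\triangle)^{1/2}u$, then run a paraproduct decomposition and close with Strichartz---is the same as the paper's. But there is a genuine gap in regime~(i).

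You claim that placing $(-\triangle)^{1/2}P_ku$ ($k\sim 0$) into $L^2_tL^\infty_x$ and $P_mB$ into $L^2_tL^2_x$, then expanding the high--high part of $P_mB$ with both factors in $L^4_tL^4_x$, yields decay $2^{-\sigma(k-m)}\sim 2^{\sigma m}$ for $m\ll 0$. It does not. From the $S_k$-norm one has $\|u_\ell\|_{L^4_tL^4_x}\lesssim 2^{-5\ell/4}\|u_\ell\|_{S_\ell}$, so for the dominant configuration $k_1\sim k_2\sim m$ inside $P_mB$,
\[
\big\|P_m(-\triangle)^{1/2}(u_{k_1}\cdot u_{k_2})\big\|_{L^2_tL^2_x}
\lesssim 2^m\cdot 2^{-5m/2}\|u_m\|_{S}^2 = 2^{-3m/2}\|u_m\|_{S}^2,
\]
which \emph{grows} as $m\to -\infty$. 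The same growth appears with any H\"older splitting at the level of $L^2_tL^2_x$ (e.g.\ $L^2_tL^\infty_x\times L^\infty_tL^2_x$), so the sum over $m\ll 0$ diverges. Relatedly, your rule ``assign the distinguished frequency to the lowest-frequency $u$'' cannot work here: for the configuration $(u_0,u_m,u_m)$ you would need $2^{\sigma m}\|u_m\|_S\|u\|_S^2$, which is false.

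The paper avoids this by \emph{not} localizing $B$ when the first factor sits at or above the output frequency: in that regime it puts the first factor (treated as a whole via the preliminary bound $\|P_{k_1}\Pi_{\tilde u^\perp}((-\triangle)^{1/2}u)\|_S\lesssim 2^{k_1}\sum_\ell 2^{-\sigma_1|\ell-k_1|}\|u_\ell\|_S$) into $L^\infty_tL^2_x$ and estimates the \emph{undecomposed} scalar $B$ in $L^1_tL^\infty_x$, proving $\|B\|_{L^1_tL^\infty_x}\lesssim\|u\|_S^2$ by summing \eqref{rescaled} over all output frequencies. The distinguished frequency is then the frequency $k_1$ of the first factor (near the output scale), not the lowest input. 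Your regime~(ii) and regime~(iii) sketches are essentially fine and line up with the paper's case $k_1<-10$ and the high--high part of $k_1\geq -10$; but regime~(i) needs to be redone along these lines. Also note that splitting off $\Pi_{\tilde u^\perp}$ via $\tilde u\cdot(-\triangle)^{1/2}u$ does not give you access to the cancellation (since $\tilde u\cdot\tilde u\neq 1$ in general); the paper instead bounds $P_{k_1}\Pi_{\tilde u^\perp}((-\triangle)^{1/2}u)$ directly in $S$ and carries that estimate through.
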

\begin{proof} In the sequel, the norm $\|\cdot\|_{S}$ will only refer to the Strichartz type mixed Lebesgue norms in the original definition of $\|\cdot\|_{S}$, and with $\nabla_{t,x}$ replaced by $\nabla_x$. This weaker norm will suffice to bound things. To begin with, we observe that 
\begin{equation}\label{eq:piu}
\big\|P_0\big(\Pi_{\tilde{u}^{\perp}}\big((-\triangle)^{\frac12}u\big)\big\|_{S}\lesssim (1+\big\|\tilde{u}\big\|_{S})\sum_{k_1\in Z}2^{-\sigma_1|k_1|}\big\|P_{k_1}u\big\|_{S}
\end{equation}
for suitable $\sigma_1>0$, and by re-scaling we have 
\begin{equation}\label{eq:piu1}
\big\|P_k\big(\Pi_{\tilde{u}^{\perp}}\big((-\triangle)^{\frac12}u\big)\big\|_{S}\lesssim 2^k(1+\big\|\tilde{u}\big\|_{S})\sum_{k_1\in Z}2^{-\sigma_1|k_1-k|}\big\|P_{k_1}u\big\|_{S}
\end{equation}
To see this bound, write 
\begin{align*}
P_0\big(\Pi_{\tilde{u}^{\perp}}\big((-\triangle)^{\frac12}u\big) = P_0(-\triangle)^{\frac12}u - P_0\big((\frac{\tilde{u}}{\|\tilde{u}\|}\cdot(-\triangle)^{\frac12}u)\frac{\tilde{u}}{\|\tilde{u}\|}\big)
\end{align*}
Then further decompose 
\begin{align*}
P_0\big((\frac{\tilde{u}}{\|\tilde{u}\|}\cdot(-\triangle)^{\frac12}u)\frac{\tilde{u}}{\|\tilde{u}\|}\big) &= P_0\big((\frac{\tilde{u}}{\|\tilde{u}\|}\cdot(-\triangle)^{\frac12}u_{>10})\frac{\tilde{u}}{\|\tilde{u}\|}\big)\\
& + P_0\big((\frac{\tilde{u}}{\|\tilde{u}\|}\cdot(-\triangle)^{\frac12}u_{[-10,10]})\frac{\tilde{u}}{\|\tilde{u}\|}\big)\\
&+ P_0\big((\frac{\tilde{u}}{\|\tilde{u}\|}\cdot(-\triangle)^{\frac12}u_{<-10})\frac{\tilde{u}}{\|\tilde{u}\|}\big)\\
\end{align*}
Then use for the first term on the right that 
\begin{align*}
&\big\|P_0\big((\frac{\tilde{u}}{\|\tilde{u}\|}\cdot(-\triangle)^{\frac12}u_{>10})\frac{\tilde{u}}{\|\tilde{u}\|}\big)\big\|_{L_t^2L_x^\infty\cap L_t^\infty L_x^2}\\&\leq \sum_{k_1>10}\big\|P_0\big(([\frac{\tilde{u}}{\|\tilde{u}\|}]_{>k_1-10}\cdot(-\triangle)^{\frac12}u_{k_1})\frac{\tilde{u}}{\|\tilde{u}\|}\big)\big\|_{L_t^2L_x^\infty\cap L_t^\infty L_x^2}\\
&+\sum_{k_1>10}\big\|P_0\big(([\frac{\tilde{u}}{\|\tilde{u}\|}]_{\leq k_1-10}\cdot(-\triangle)^{\frac12}u_{k_1})[\frac{\tilde{u}}{\|\tilde{u}\|}]_{>k_1-5}\big)\big\|_{L_t^2L_x^\infty\cap L_t^\infty L_x^2}\\
&\lesssim  \sum_{k_1>10}\sum_{a=5,10}\big\|[\frac{\tilde{u}}{\|\tilde{u}\|}]_{>k_1-a}\big\|_{L_t^\infty L_x^2}\big\|(-\triangle)^{\frac12}u_{k_1}\big\|_{L_t^2L_x^\infty\cap L_t^\infty L_x^2}\\
&\lesssim \sum_{k_1>10}2^{-\frac{k_1}{2}}\big\|u_{k_1}\big\|_{S},
\end{align*}
where we have used Holder's and Bernstein's inequality. 
For the second term on the right, we have 
\begin{align*}
\big\|P_0\big((\frac{\tilde{u}}{\|\tilde{u}\|}\cdot(-\triangle)^{\frac12}u_{[-10,10]})\frac{\tilde{u}}{\|\tilde{u}\|}\big)\big\|_{L_t^2L_x^\infty\cap L_t^\infty L_x^2}&\lesssim \big\|(-\triangle)^{\frac12}u_{[-10,10]}\big\|_{L_t^2L_x^\infty\cap L_t^\infty L_x^2}\\
&\lesssim \sum_{k_1\in [-10,10]}2^{-\frac{|k_1|}{2}}\big\|u_{k_1}\big\|_{S}.
\end{align*}
Finally, for the last term on the right, we get 
\begin{align*}
&\big\|P_0\big((\frac{\tilde{u}}{\|\tilde{u}\|}\cdot(-\triangle)^{\frac12}u_{<-10})\frac{\tilde{u}}{\|\tilde{u}\|}\big)\big\|_{L_t^2L_x^\infty\cap L_t^\infty L_x^2}\\
&\lesssim \big\|(-\triangle)^{\frac12}u_{<-10}\big\|_{L_t^2 L_x^\infty\cap L_{t,x}^{\infty}}\big\|[\frac{\tilde{u}}{\|\tilde{u}\|}]_{>-10}\big\|_{L_t^\infty L_x^2}\\
&\lesssim  \sum_{k_1<-10}2^{-\frac{|k_1|}{2}}\big\|u_{k_1}\big\|_{S}.
\end{align*}
The bound \eqref{eq:piu} then follows in light of the fact that all the norms in $\|\cdot\|_{S}$ are obtained by interpolating between $L_t^2 L_x^\infty$ and $L_t^\infty L_x^2$ and using Bernstein's inequality. 
\\
Proceeding with the proof of the lemma, it suffices to prove a bound of the form 
\begin{equation}\label{eq:lemma1}
\big\|P_0\big((P_{k_1}\Pi_{\tilde{u}^{\perp}})\big((-\triangle)^{\frac12}u\big) \big(u\cdot (-\triangle)^{\frac12}u\big)\big\|_{L_t^1 \dot{H}_x^1}\lesssim 2^{-\sigma |k_1|}\sum_k 2^{-\sigma|k-k_1|}\big\|u_{k}\big\|_{S}\big\|u\big\|_{S}^2, 
\end{equation}
for suitable $\sigma>0$, since we can then bound the right hand side by
$$2^{-\frac{\sigma}{2}|k_1|} \sum_k 2^{-\frac{\sigma}{2}|k|}\big\|u_{k}\big\|_{S}\big\|u\big\|_{S}^2$$
and summing over $k_1$ we obtain the statement of the lemma.

We show \eqref{eq:lemma1} by distinguishing between a number of cases: 
\\

{\it{(i): $k_1<-10$.}} We place $(P_{k_1}\Pi_{\tilde{u}^{\perp}})\big((-\triangle)^{\frac12}u\big)$ into $L_t^2 L_x^\infty$, 
\begin{align*}
&\big\|P_0\big((P_{k_1}\Pi_{\tilde{u}^{\perp}})\big((-\triangle)^{\frac12}u\big)\big(u\cdot (-\triangle)^{\frac12}u\big)\big\|_{L_t^1 \dot{H}_x^1}\\& \lesssim \norm{(P_{k_1}\Pi_{\tilde{u}^{\perp}})\big((-\triangle)^{\frac12}u\big)}_{L_t^2 L_x^\infty} \big\|P_0\big(u_{}\cdot (-\triangle)^{\frac12}u\big)\big\|_{L_t^2L_x^{2}}\\& \lesssim 2^{\frac{k_1}{2}} \big(\sum_{k_2}2^{-\sigma_1|k_1-k_2|}\|u_{k_2}\|_{S}\big) \big\|P_0\big(u_{}\cdot (-\triangle)^{\frac12}u\big)\big\|_{L_t^2L_x^{2}},
\end{align*}
where we have used \eqref{eq:piu1} and the definition of $S$ for the last inequality. 
 
 We are thus left with showing that
$\big\|P_0\big(u_{}\cdot (-\triangle)^{\frac12}u\big)\big\|_{L_t^2L_x^{2}}\lesssim\big\|u\big\|_{S}^2. $ 
If the first factor $u$ has large frequency the estimate is straightforward:
\begin{align*}
\big\|P_0\big(u_{\geq -10}\cdot (-\triangle)^{\frac12}u\big)\big\|_{L_t^2L_x^{2}}&\le \sum_{|k_1-k_2|<10,\,k_1>10}\norm{u_{k_1}}_{L_t^\infty L_x^2} \norm{(-\triangle)^{\frac12}u_{k_2}}_{L_t^2 L_x^\infty}\\
&+\sum_{k_1\in [-10,10],\,k_2<20}\norm{u_{k_1}}_{L_t^\infty L_x^2} \norm{(-\triangle)^{\frac12}u_{k_2}}_{L_t^2 L_x^\infty}\\& \lesssim \big\|u\big\|_{S}^2. 
\end{align*}
If $u$ has low frequency we use Equation \eqref{eq:key2} to turn it into high frequency:
\begin{align*}
\big\|P_0\big(P_{<-10}u\cdot (-\triangle)^{\frac12}u\big)\big\|_{L_t^1 \dot{H}_x^1}\lesssim 
\big\|P_0(-\triangle^{\frac12}(P_{\geq -10}u\cdot P_{\geq -10}u)\big\|_{L_t^2 L_x^2}
+ \sum_{\substack{\ell <-10,\\ k_2\in[-2,2]}} 2^{\ell} \big\|u_\ell \big\|_{L_t^2L_x^\infty}\big\|u_{k_2}\big\|_{L_t^\infty L_x^2}.
\end{align*}
This expression now is easily seen to be bounded by $\norm{u}_S^2$ and so the case $k_1 <-10$ is finished. 
\\

{\it{(ii): $k_1\ge -10$.}} Now we place $(P_{k_1}\Pi_{\tilde{u}^{\perp}})\big((-\triangle)^{\frac12}u\big)$ into $L_t^\infty L_x^2$
\begin{align*}
&\big\|P_0\big((P_{k_1}\Pi_{\tilde{u}^{\perp}})\big((-\triangle)^{\frac12}u\big) \big(u\cdot (-\triangle)^{\frac12}u\big)\big\|_{L_t^1 \dot{H}_x^1}\\&
 \le \norm{(P_{k_1}\Pi_{\tilde{u}^{\perp}})\big((-\triangle)^{\frac12}u\big)}_{L_t^{\infty}L_x^2}  \norm{u\cdot (-\triangle)^{\frac12}u}_{L_t^1 L_x^{\infty}}\\
 & \le 2^{-k_1}\sum_{k}2^{-\sigma_1|k-k_1|}\big\|u_{k}\big\|_S \norm{u\cdot (-\triangle)^{\frac12}u}_{L_t^1 L_x^{\infty}}.
\end{align*}
We are left with showing that $\norm{u\cdot (-\triangle)^{\frac12}u}_{L_t^1 L_x^{\infty}}\lesssim \norm{u}_S^2.$ Decompose 
\begin{align}\label{case2}
\begin{split}
u\cdot (-\triangle)^{\frac12}u
= \sum_{k\in \Z} P_k \big(u_{<k-10}\cdot (-\triangle)^{\frac12}u + u_{\geq k-10}\cdot (-\triangle)^{\frac12}u_{<k-10} + u_{\geq k-10}\cdot (-\triangle)^{\frac12}u_{\geq k-10}\big) 
\end{split}
\end{align}
Then estimate the first term by using Equation \eqref{rescaled}:
\begin{align*}
\big\|P_k\big(u_{<k-10}\cdot (-\triangle)^{\frac12}u\big)\big\|_{L_t^1 L_x^\infty}
\lesssim \big\|P_k (-\triangle)^{\frac12}\big(u_{\geq k-10}\cdot u_{\geq k-10}\big)\big\|_{L_t^1 L_x^\infty}
 +  \sum_{\substack{\ell<k-10\\ k_2\in [k-2,k+2]}} 2^{\ell} \big\|u_{\ell} \big\|_{L_t^2 L_x^\infty}\big\|u_{k_2}\big\|_{L_t^2 L_x^\infty}
\end{align*}
Summing over $k\in \Z$ we get for the first term on the right
\begin{align*}
\sum_{k\in \Z}\big\|&P_k (-\triangle)^{\frac12}\big(u_{\geq k-10}\cdot u_{\geq k-10}\big)\big\|_{L_t^1 L_x^\infty} \lesssim 
\sum_{k\in \Z} 2^{\frac{k}{2}} \norm{u_{\ge k-10}}_{L_t^2 L_x^\infty}\norm{u}_{S} \le \\
& \sum_{k\in \Z} \sum_{\ell \ge k-10} 2^{\frac{k-\ell}{2}} \norm{u_\ell}_{S} \norm{u}_{S}
 \lesssim \norm{u}_{S}^2;
\end{align*}
for the second term the same bound is seen to hold so 
\begin{align*}
\sum_{k\in \Z}\big\|P_k\big(u_{<k-10}\cdot (-\triangle)^{\frac12}u\big)\big\|_{L_t^1 L_x^\infty}\lesssim \big\|u\big\|_{S}^2. 
\end{align*}

Now for the second term in brackets in Equation \eqref{case2}
 we directly see that 
\begin{align*}
\sum_{k\in \Z} \norm{P_{k} \big(u_{\geq k-10}\cdot (-\triangle)^{\frac12}u_{< k-10}\big)}_{L^1_t L^\infty_x}\le  \sum_{k\in \Z} \norm{u_{[k-5, k+5]}}_{L^2_t L^\infty_x} \norm{ (-\triangle)^{\frac12}u_{< k-10}}_{L^2_t L^\infty_x} \lesssim \big\|u\big\|_{S}^2. 
\end{align*}
It remains to estimate the last term in brackets in Equation \eqref{case2}:
 $$\norm{\sum_{\substack{\ell,m
\in \Z \\|\ell-m|\le 20}} u_{\ell}\cdot (-\triangle)^{\frac12}u_{m} }_{L_t^1 L_x^\infty}\le 
\sum_{\substack{\ell,m
\in \Z \\|\ell-m|\le 20}} \norm{u_{\ell}}_{L_t^2 L_x^\infty} \norm{(-\triangle)^{\frac12}u_{m}}_{L_t^2 L_x^\infty}
\lesssim \sum_{m
\in \Z } \norm{u}_S \norm{u_m}_S 
= \norm{u}_S^2.$$

We conclude that 
\begin{align*}
&\big\|P_0\big((P_{k_1}\Pi_{\tilde{u}^{\perp}})\big((-\triangle)^{\frac12}u\big) \big(u\cdot (-\triangle)^{\frac12}u\big)\big\|_{L_t^1 \dot{H}_x^1}
\lesssim 2^{-\frac{k_1}{2}}\sum_k2^{-\sigma |k|}\big\|u_{k}\big\|_S\big\|u\big\|_{S}^2.
\end{align*}


\end{proof}

We next prove \eqref{eq:multilin3}. This will follow from the next lemma, after a simple re-scaling:
\begin{lem}\label{lem:complicatedterm} Under the assumptions of the preceding proposition, we have the bound 
\begin{align*}
&\big\|(P_0\Pi_{\tilde{u}^{\perp}})\big[u\times (-\triangle)^{\frac12}(u \times (-\triangle)^{\frac12}u) - u\times (u\times (-\triangle u))\big]\big\|_{L_t^1 \dot{H}_x^1}\\&
\lesssim \sum_{k\in \Z} 2^{-\sigma |k|} \big\|u_{k}\big\|_{S}\big\|u\big\|_{S}^2 
\end{align*}
for suitable $\sigma>0$. 
\end{lem}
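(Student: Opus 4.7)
The plan is to rewrite the bracket as $u\times E$, where
\[
E := (-\triangle)^{\frac12}\big(u\times (-\triangle)^{\frac12}u\big) - u\times \big((-\triangle)u\big) = \big[(-\triangle)^{\frac12},\,u\times\,\big](-\triangle)^{\frac12}u
\]
is a commutator expression, bilinear in $u$ and $(-\triangle)^{\frac12}u$. Its Fourier symbol is $m(\xi,\eta) = |\eta|\big(|\xi+\eta|-|\eta|\big)$, which by the triangle inequality $||\xi+\eta|-|\eta||\leq|\xi|$ satisfies $|m(\xi,\eta)|\lesssim|\xi||\eta|$, yielding a gain of $2^{k_2-k_3}$ over the naive $|\eta|^2$ in the low-high regime $k_2\ll k_3$. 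Applying Lemma~\ref{lem:KS} to $m$ (and its derivatives, which obey the same scaling) on the support of $\chi_{k_2}(\xi)\chi_{k_3}(\eta)$ then gives, for any translation-invariant norms with $\|fg\|_Z\leq \|f\|_X\|g\|_Y$,
\[
\big\|E_{k_2,k_3}\big\|_Z \lesssim 2^{k_2+k_3}\big\|u_{k_2}\big\|_X\big\|u_{k_3}\big\|_Y.
\]

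Dyadically decomposing in all three copies of $u$, the task reduces to bounding $P_0(u_{k_1}\times E_{k_2,k_3})$ in $L^1_t \dot H^1_x$ with a constant summable into $\sum_k 2^{-\sigma|k|}\|u_k\|_S\|u\|_S^2$. The outer projection $\Pi_{\tilde u^\perp}$ is peeled off exactly as at the beginning of the proof of Lemma~\ref{lem:multilin2}: expanding $\Pi_{\tilde u^\perp}v = v - (\tilde u/|\tilde u|\cdot v)(\tilde u/|\tilde u|)$ and invoking the Moser-type bound \eqref{eq:piu1}, which itself rests on the paradifferential identity \eqref{eq:key_general} for $\tilde u\cdot\tilde u \approx 1$, produces the factor $(1+\|\tilde u\|_S)$ and geometric frequency-gap decay at the cost of no loss. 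One is then left with a case analysis in $(k_1,k_2,k_3)$ organised by which $k_j$ has largest absolute value: this becomes the distinguished factor summed against $2^{-\sigma|k|}$, and the other two are absorbed into $\|u\|_S^2$ via Strichartz norms. In the all-low regime $k_j\lesssim 0$, I would place the distinguished factor in $L^\infty_t L^2_x$ and the other two in $L^2_t L^\infty_x$, combining the weight $2^{-k/2}$ from the $S$-norm at $(2,\infty)$ with the symbol bound $2^{k_2+k_3}$ and with Bernstein on the low frequencies to close each geometric series; for configurations with one or two frequencies near $0$, the $n=4$-admissible pairs $L^4_t L^4_x$ and $L^4_t L^\infty_x$ complete the argument.

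The main obstacle is the high-high-high regime $k_1\sim k_2\sim k_3\gg 0$, in which the commutator gain saturates at $|m|\sim 2^{2k_3}$ and coincides with the naive bound. In this regime the required decay $2^{-\sigma k_1}$ must be extracted from the outer cross product $u_{k_1}\times E_{k_2,k_3}$ itself, via the high-high-to-low interaction that forces $\xi_1+\xi_2+\xi_3\approx 0$ — a Knapp-type angular restriction on the triple. Mirroring the corresponding step in Lemma~\ref{lem:multilin2}, I would invoke the identity \eqref{eq:key_general} applied to $\tilde u\cdot\tilde u = 1$ (mediated through the $\Pi_{\tilde u^\perp}$ projector) to trade this high-high-to-low pairing for a product of two high-frequency pieces of $\tilde u$ against a high-frequency output of $E_{k_2,k_3}$, which can then be estimated by $L^2_t L^\infty_x\cdot L^2_t L^\infty_x\cdot L^\infty_t L^2_x$, yielding the required geometric decay.
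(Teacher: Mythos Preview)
Your proposal has a genuine gap: you have misidentified the hard case, and your proposed remedy does not address the actual obstruction.

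The configuration $k_1\sim k_2\sim k_3\gg 0$ is in fact straightforward. With $P_0$ on the outside, $E_{k_2,k_3}$ must land at frequency $\sim k_1$, and then placing $u_{k_1}\in L^2_tL^\infty_x$, $u_{k_2}\in L^2_tL^\infty_x$, $u_{k_3}\in L^\infty_tL^2_x$ together with the symbol bound $|m|\lesssim 2^{2k_3}$ already yields $2^{-k_1}\|u_{k_1}\|_S\|u\|_S^2$. The genuinely problematic regime is the one you skip over: the \emph{outer} factor $u_{k_1}$ at very low frequency $k_1\ll 0$ with $k_2,k_3$ near $0$. Here the commutator gain is useless (it helps only when the \emph{middle} factor is low), and one is forced to put two of the three factors into $L^2_t$. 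Since $L^2_tL^4_x$ is forbidden in $n=4$, and $\|u_{k_1}\|_{L^2_tL^\infty_x}\sim 2^{-k_1/2}\|u_{k_1}\|_S$ diverges as $k_1\to-\infty$, pure Strichartz estimates cannot close this case. The paper treats exactly this obstruction in each of its three subcases (the ``more delicate'' subterm in (i), the $u_{<-10}$ contribution in (ii), and the $u_{<-5}$ contribution in (iii)).

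The mechanism that rescues these terms is not available in your sketch. The paper applies the vector triple product identity $a\times(b\times c)=b(a\cdot c)-c(a\cdot b)$ to convert $u_{\mathrm{low}}\times(u\times(-\triangle)^{1/2}u)$ into expressions containing the \emph{scalar} products $u_{\mathrm{low}}\cdot u_{\mathrm{high}}$, to which the key identity \eqref{eq:key} for $u\cdot u=1$ (not $\tilde u$) can be applied. Because $(-\triangle)^{1/2}$ is nonlocal, this requires the Fourier-series expansion of Lemma~\ref{lem:KS} together with the translation argument $u_{\mathrm{low}}(x)=u_{\mathrm{low}}(x+p)-\int_0^1 p\cdot\nabla u_{\mathrm{low}}(x+ps)\,ds$ to bring the two factors to the same base point before invoking \eqref{eq:key}. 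Your proposal never mentions the triple product identity; and your suggested use of ``$\tilde u\cdot\tilde u=1$ mediated through $\Pi_{\tilde u^\perp}$'' cannot work, both because you have already peeled off $\Pi_{\tilde u^\perp}$ (so $\tilde u$ no longer appears in the expression you are estimating) and because $\tilde u$ is merely near $S^2$, so no exact algebraic identity holds for it. The geometric constraint that drives the argument is $u\cdot u=1$, accessed through the cross-product algebra inside the bracket.
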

\begin{proof} This is more delicate than the previous proof, since we have to take advantage of the relation \eqref{eq:key}, but the operator $(-\triangle)^{\frac12}$ acts non-locally, causing the factors in \eqref{eq:key} to be evaluated at different points. 

Observe that it is enough to prove the bound
\begin{align}\label{eq:boundthis}\begin{split}
&\big\|P_0\big[u\times (-\triangle)^{\frac12}(u_{k_1}\times (-\triangle)^{\frac12}u) - u\times (u_{k_1}\times (-\triangle u))\big]\big\|_{L_t^1 \dot{H}_x^1}\\
&\lesssim 2^{-\sigma |k_1|} \big\|u_{k_1}\big\|_{S}\big\|u\big\|_{S}^2 + \norm{u_{k_1}}_S \norm{u_{0}}_S\norm{u}_S.
\end{split}
\end{align}
To get rid of the projection operator $\Pi_{\tilde{u}^{\perp}}$ we use the inequality 
\begin{align*}
\big\|(P_0\Pi_{\tilde{u}^{\perp}})P_kF\big\|_{L_t^1 \dot{H}_x^1}\lesssim 2^{-\sigma_2|k|}\big\|P_kF\big\|_{L_t^1 \dot{H}_x^1}, 
\end{align*}
for suitable $\sigma_2>0$, which is proved exactly like \eqref{eq:piu}. 
Assuming then Equation \eqref{eq:boundthis} and re-scaling, we  get the bound 
\begin{align*}
&\big\|(P_0\Pi_{\tilde{u}^{\perp}}) \big(u\times (-\triangle)^{\frac12}(u_{k_1}\times (-\triangle)^{\frac12}u) - u\times (u_{k_1}\times (-\triangle u)\big) \big\|_{L_t^1 \dot{H}_x^1}\\&
\lesssim \sum_{k\in \Z} 2^{-\sigma_2|k|} \big(2^{-\sigma |k_1|} \norm{u_{k_1+k}}_S \norm{u}^2_S + \norm{u_{k_1+k}}_S \norm{u_k}_S \norm{u}_S \big)
\end{align*}
which yields the assertion of the lemma after summing over $k_1$.


To prove \eqref{eq:boundthis} we again split into different cases depending on $k_1$: 
\\

{\it{(i): $k_1<-10$.}} Decompose into an easy and a more delicate term: 
\begin{align*}
&P_0\big[u\times (-\triangle)^{\frac12}(u_{k_1}\times (-\triangle)^{\frac12}u) - u\times (u_{k_1}\times (-\triangle u))\big]\\
&= P_0\big[u_{\geq k_1-10}\times (-\triangle)^{\frac12}(u_{k_1}\times (-\triangle)^{\frac12}u) - u_{\geq k_1-10}\times (u_{k_1}\times (-\triangle u))\big]\\
& + P_0\big[u_{<k_1-10}\times (-\triangle)^{\frac12}(u_{k_1}\times (-\triangle)^{\frac12}u) - u_{<k_1-10}\times (u_{k_1}\times (-\triangle u))\big]\\
\end{align*}
Then we bound the first term on the right as follows: we have 
\begin{align*}
&P_0\big[u_{[k_1-10, -10]}\times (-\triangle)^{\frac12}(u_{k_1}\times (-\triangle)^{\frac12}u) - u_{[k_1-10, -10]}\times (u_{k_1}\times (-\triangle u))\big]\\
& = P_0\big[u_{[k_1-10, -10]}\times (-\triangle)^{\frac12}(u_{k_1}\times (-\triangle)^{\frac12}u_{[-2,2]}) - u_{[k_1-10, -10]}\times (u_{k_1}\times (-\triangle u_{[-2,2]}))\big]\\
\end{align*}
Applying the multiplier lemma, i.e. Equation \eqref{singlefreq2}, we find
\begin{align*}
&\big\|(-\triangle)^{\frac12}(u_{k_1}\times (-\triangle)^{\frac12}u_{[-2,2]}) - (u_{k_1}\times (-\triangle u_{[-2,2]}))\big\|_{L_{t,x}^2}\\
&\lesssim 2^{{k_1}}\big\|u_{k_1}\big\|_{L_t^2 L_x^\infty}\big\|u_{[-2,2]}\big\|_{L_t^\infty L_x^2}\le  2^{\frac{k_1}{2}}\big\|u_{k_1}\big\|_{S}\big\|u\big\|_{S}.
\end{align*}
Then 
\begin{align*}
&\big\|P_0\big[u_{[k_1-10, -10]}\times (-\triangle)^{\frac12}(u_{k_1}\times (-\triangle)^{\frac12}u_{[-2,2]}) - u_{[k_1-10, -10]}\times (u_{k_1}\times (-\triangle u_{[-2,2]}))\big]\big\|_{L_t^1 \dot{H}_x^1}\\
&\lesssim \big\|u_{[k_1-10, -10]}\big\|_{L_t^2 L_x^\infty}\cdot 2^{\frac{k_1}{2}}\big\|u_{k_1}\big\|_{S}\big\|u_{[-2,2]}\big\|_{S}\lesssim \sum_{k_2\in [k_1-10, -10]}2^{\frac{k_1 - k_2}{2}}\big\|u_{k_2}\big\|_{S}\big\|u_{k_1}\big\|_{S}\big\|u\big\|_{S}\\
&\lesssim \big\|u_{k_1}\big\|_{S}\big\|u\big\|_{S}\big\|u_{[-2,2]}\big\|_{S},
\end{align*}
which is as desired. 
We omit the contributions when the first factor is replaced by $u_{>-10}$, which is handled similarly. 
\\
Thus consider now the more interesting term 
\begin{equation}\label{eq:harder}\begin{split}
&P_0\big[u_{<k_1-10}\times (-\triangle)^{\frac12}(u_{k_1}\times (-\triangle)^{\frac12}u) - u_{<k_1-10}\times (u_{k_1}\times (-\triangle u))\big]\\
&=P_0\big[u_{<k_1-10}\times (-\triangle)^{\frac12}(u_{k_1}\times (-\triangle)^{\frac12}u_0) - u_{<k_1-10}\times (u_{k_1}\times (-\triangle u_0))\big]\\
\end{split}\end{equation}
In order to handle it, we shall invoke the relation 
\begin{equation}\label{eq:crosspr}
a\times (b\times c) = b(a\cdot c) - c(a\cdot b), 
\end{equation}
but we have to take into account the non-local operator $(-\triangle)^{\frac12}$. Using the proof of Lemma \ref{lem:KS}, we can write 
\begin{align*}
&(-\triangle)^{\frac12}(u_{k_1}\times (-\triangle)^{\frac12}u_0) - (u_{k_1}\times (-\triangle u_0))\\
& = \sum_{m\in \Z^4, p\in \Z^4}a_{mp}u_{k_1}(x+2^{-k_1}m)\times u_0(x+p),
\end{align*}
where we have the bound $\big|a_{mp}\big|\lesssim_N 2^{k_1}\big[|m| + |p|\big]^{-N}$. Thus using \eqref{eq:crosspr} we can write \eqref{eq:harder} in the form 
\begin{align*}
&\sum_{m\in \Z^4, p\in \Z^4}a_{mp}u_{k_1}(x+2^{-k_1}m)\big(u_{<k_1-10}(x)\cdot u_0(x+p)\big)\\
& - \sum_{m\in \Z^4, p\in \Z^4}a_{mp}u_0(x+p)\big(u_{<k_1-10}(x)\cdot u_{k_1}(x+2^{-k_1}m)\big).
\end{align*}
Then the idea is to reduce to \eqref{eq:key}. For this write 
\begin{align*}
u_{<k_1-10}(x) = u_{<k_1-10}(x+p) - \int_0^1 p\cdot\nabla u_{<k_1-10}(x+ps)\,ds.
\end{align*}
Note that the integral term is bounded by 
\begin{align*}
\big\|\int_0^1 p\cdot\nabla u_{<k_1-10}(x+ps)\,ds\big\|_{L_t^2 L_x^\infty}\lesssim |p|\cdot \big\|\nabla  u_{<k_1-10}\big\|_{L_t^2 L_x^\infty},
\end{align*}
and so replacing the factor $u_{<k_1-10}$ by the integral term in the above sum, we get the bound 
\begin{align*}
&\big\|\sum_{m\in \Z^4, p\in \Z^4}a_{mp}u_{k_1}(x+2^{-k_1}m)\big((\int_0^1\ldots ds)\cdot u_0(x+p)\big)\big\|_{L_t^1 \dot{H}_x^1}\\
&\lesssim \sum_{m\in \Z^4, p\in \Z^4}a_{mp}\big\|u_{k_1}(x+2^{-k_1}m)\big\|_{L_t^2 L_x^\infty}\big\|\int_0^1\ldots ds\big\|_{L_t^2 L_x^\infty}\big\|u_0\big\|_{L_t^\infty L_x^2}\\
&\lesssim \sum_{m\in \Z^4, p\in \Z^4}|p|a_{mp}\|u_{k_1}\|_{S}\|u\|_S\|u_0\|_{S},
\end{align*}
where the sum over $m, p$ converges due to the rapid decay of $a_{mp}$. 
\\
It follows that we have reduced to estimating 
\begin{align*}
\sum_{m\in \Z^4, p\in \Z^4}a_{mp}u_{k_1}(x+2^{-k_1}m)\big(u_{<k_1-10}(x+p)\cdot u_0(x+p)\big),
\end{align*}
for which we can use \eqref{eq:key}. More precisely, write 
\begin{align*}
&u_{k_1}(x+2^{-k_1}m)\big(u_{<k_1-10}(x+p)\cdot u_0(x+p)\big)\\& = u_{k_1}(x+2^{-k_1}m)\big(u_{<-10}(x+p)\cdot u_0(x+p)\big)\\&
- u_{k_1}(x+2^{-k_1}m)\big(u_{[k_1-10, -10]}(x+p)\cdot u_0(x+p)\big)
\end{align*}
To bound the contribution from the second term on the right, we use 
\begin{align*}
&\big\|P_0\big[u_{k_1}(x+2^{-k_1}m)\big(u_{[k_1-10, -10]}(x+p)\cdot u_0(x+p)\big)\big]\big\|_{L_t^1 \dot{H}_x^1}\\
&\lesssim\big\|u_{k_1}(x+2^{-k_1}m)\big\|_{L_t^2 L_x^\infty}\big\|u_{[k_1-10, -10]}\big\|_{L_t^2 L_x^\infty}\big\|u_0\big\|_{L_t^\infty L_x^2}\\
&\lesssim \sum_{k_2\in [k_1-10, -10]}\|u_{k_1}\|_{S}\|u_{k_2}\|_S 2^{-\frac{k_1+k_2}{2}}\|u_0\|_{S}. 
\end{align*}
The coefficient $a_{mp}$ furnishes an extra $2^{k_1}$, and so inserting the previous bound we obtain
\begin{align*}
&\big\|\sum_{m\in \Z^4, p\in \Z^4}a_{mp}P_0[u_{k_1}(x+2^{-k_1}m)\big(u_{[k_1-10, -10]}(x+p)\cdot u_0(x+p)\big)]\big\|_{L_t^1\dot{H}_x^1}\lesssim \big\|u_{k_1}\big\|_{S}\big\|u\big\|_{S}\big\|u_{0}\big\|_{S}. 
\end{align*}
 It remains to deal with the contribution of 
\[
P_0[u_{k_1}(x+2^{-k_1}m)\big(u_{<-10}(x+p)\cdot u_0(x+p)\big)]. 
\]
Here we use \eqref{eq:key}, which gives 
\begin{align*}
\big\|P_0[u_{<-10}(x+p)\cdot u_0(x+p)]\big\|_{L_{t,x}^2}&\sim \big\|P_0[u_{\geq -10}(x+p)\cdot u_{\geq -10}(x+p)]\big\|_{L_{t,x}^2}\\
&\lesssim \big\|u\big\|_{S}^2, 
\end{align*}
and so we have 
\begin{align*}
&\big\|\sum_{m\in \Z^4, p\in \Z^4}a_{mp}P_0[u_{k_1}(x+2^{-k_1}m)\big(u_{<-10}(x+p)\cdot u_0(x+p)\big)]\big\|_{L_t^1\dot{H}_x^1}\\
&\lesssim \sum_{m\in \Z^4, p\in \Z^4}\big|a_{mp}\big| \big\|u_{k_1}\big\|_{L_t^2 L_x^\infty}\big\|u\big\|_{S}^2\lesssim 2^{\frac{k_1}{2}}\big\|u_{k_1}\big\|_{S}\big\|u\big\|_{S}^2. 
\end{align*}
It remains to deal with the expression 
\[
\sum_{m\in \Z^4, p\in \Z^4}a_{mp}u_0(x+p)\big(u_{<k_1-10}(x)\cdot u_{k_1}(x+2^{-k_1}m)\big),
\]
which is done in essentially identical fashion. 
\\

{\it{(ii): $k_1\in [-10, 10]$.}} Here we split things into 
\begin{align*}
&P_0\big[u\times (-\triangle)^{\frac12}(u_{k_1}\times (-\triangle)^{\frac12}u) - u\times (u_{k_1}\times (-\triangle u))\big]\\
& = P_0\big[u\times (-\triangle)^{\frac12}(u_{k_1}\times (-\triangle)^{\frac12}u_{\geq 20}) - u\times (u_{k_1}\times (-\triangle u_{\geq 20}))\big]\\
& + P_0\big[u\times (-\triangle)^{\frac12}(u_{k_1}\times (-\triangle)^{\frac12}u_{< 20}) - u\times (u_{k_1}\times (-\triangle u_{<20}))\big]
\end{align*}
Then we can write the first term as 
\begin{align*}
&P_0\big[u\times (-\triangle)^{\frac12}(u_{k_1}\times (-\triangle)^{\frac12}u_{\geq 20}) - u\times (u_{k_1}\times (-\triangle u_{\geq 20}))\big]\\
& = P_0\big[u_{>15}\times \left((-\triangle)^{\frac12}(u_{k_1}\times (-\triangle)^{\frac12}u_{\geq 20}) -  (u_{k_1}\times (-\triangle u_{\geq 20}))\right)\big],
\end{align*}
which is straightforward to estimate by placing $u_{>15}$  into $L_t^2 L_x^\infty$ and the other factor into $L_t^2 L_x^2$:
\begin{align}\label{eq:multiplier2}
\big\|(-\triangle)^{\frac12}(u_{k_1}\times (-\triangle)^{\frac12}u_{\ge20}) \big\|_{L_t^2 L_x^2}\lesssim \sum_{\ell\ge20} 2^{k_1+\ell} \norm{u_{k_1}}_{L_t^2 L_x^\infty} \norm{u_{\ell}}_{L_t^\infty L_x^2} \lesssim \norm{u}_S^2 
 \end{align}
and similarly for $u_{k_1} \times (-\triangle u_{\geq 20})$.

Then consider the more delicate term 
\[
P_0\big[u\times (-\triangle)^{\frac12}(u_{k_1}\times (-\triangle)^{\frac12}u_{< 20}) - u\times (u_{k_1}\times (-\triangle u_{<20}))\big].
\]
We treat 
\[
P_0\big[u_{\ge-10}\times (-\triangle)^{\frac12}(u_{k_1}\times (-\triangle)^{\frac12}u_{< 20}) - u_{\ge-10}\times (u_{k_1}\times (-\triangle u_{<20}))\big].
\]
similar to Equation \eqref{eq:multiplier2} above (but placing $u_{k_1}$ into $L_t^\infty L_x^2$ and $u_{<20}$ into $L_t^2 L_x^\infty$). Thus we are left with the term 
\[
P_0\big[u_{<-10}\times (-\triangle)^{\frac12}P_0(u_{k_1}\times (-\triangle)^{\frac12}u_{< 20}) - u_{<-10}\times P_0(u_{k_1}\times (-\triangle u_{<20}))\big].
\]

Similar to case~(i), write
\begin{align*}
&P_0\big((-\triangle)^{\frac12}(u_{k_1}\times (-\triangle)^{\frac12}u_{< 20}) - (u_{k_1}\times (-\triangle u_{< 20}))\big)\\
& = \sum_{m, p\in \Z^4}a_{mp}u_{k_1}(x+2^{-k_1}m)\times ((-\triangle)^{\frac12}u_{< 20})(x+2^{-20} p),
\end{align*}
where $|a_{mp}|\lesssim (|m| + |p|)^{-N}$. Thus using Equation \eqref{eq:crosspr} we can rewrite
\begin{align*}
u_{<-10}\times P_0\big((-\triangle)^{\frac12}&(u_{k_1}\times (-\triangle)^{\frac12}u_{< 20}) - u_{k_1}\times (-\triangle u_{<20})\big)
\\= \sum_{m, p\in \Z^4} a_{mp} \Big(&u_{k_1}(x+ 2^{-10} m)(u_{<-10}(x)\cdot (-\triangle)^{\frac12}u_{< 20}(x+2^{-20} p)) - \\&
((-\triangle)^{\frac12} u_{< 20})(x+2^{-20} p)(u_{k_1}(x+ 2^{-10} m) \cdot u_{<-10}(x))\Big).
\end{align*}
Let us estimate the first of the two terms in brackets. As before write $u_{<-10}(x)=u_{<-10}(x+2^{-20}p)- 2^{-20} p \cdot \int_0^1 \nabla u_{<-10}(x+2^{-20}ps) ds$. Then 
\begin{align*}
\norm{&\sum_{m\, p\in \Z^4} a_{mp} u_{k_1}(x+ 2^{-10} m)(u_{<-10}(x)\cdot ((-\triangle)^{\frac12}u_{< 20})(x+2^{-20} p)) }_{L_t^1 L_x^2} \lesssim \\&
\sum_{m, p\in \Z^4} |a_{mp}| \norm{u_{k_1}}_{L_t^{\infty} L_x^2} \left(\left\|p \cdot \int_0^1 \nabla u_{<-10}(x+2^{-20}ps) ds\right\|_{L_t^2 L_x^{\infty}}  \norm{(-\triangle)^{\frac12}u_{< 20}}_{L_t^2 L_x^{\infty}} + \norm{u_{<-10} \cdot (-\triangle)^{\frac12}u_{< 20}}_{L_t^1 L_x^{\infty}}\right).
\end{align*}
The integral term is bounded by $|p| \norm{\nabla u_{< -10}}_{L_t^2 L_x^{\infty}} \lesssim |p| \norm{u}_S$, the term $\norm{u_{<-10} \cdot (-\triangle)^{\frac12}u_{< 20}}_{L_t^1 L_x^{\infty}}$ is estimated using Equation \eqref{rescaled}. The sum over $m,p$ converges due to the decay of $|a_{mp}|$ and so the whole expression is bounded by $\norm{u}_S^3$. 

It remains to estimate 
\begin{align*}
\norm{&\sum_{m, p\in \Z^4} a_{mp} (-\triangle)^{\frac12} u_{< 20}(x+2^{-20} p)(u_{k_1}(x+ 2^{-10} m) \cdot u_{<-10}(x))}_{L_t^1 L_x^2}. 
\end{align*}
For this we write as before $u_{<-10}(x)= u_{<-10}(x+ 2^{-10} m)  - 2^{-10} m \cdot \int_0^1 \nabla u_{<-10}(x+ 2^{-10} ms ) ds$. The part involving the integral is easy to estimate. For the other part we put $(-\triangle)^{\frac12}u_{< 20}$ into $L_t^2 L_x^{\infty}$ and estimate 
\begin{align*}
  \norm{u_{[-10,10]}\cdot u_{<-10}}_{L_t^2 L_x^{2}}&\le \norm{u_{[-10,10]}\cdot u_{<-15}}_{L_t^2 L_x^{2}}+ \norm{u_{[-10,10]}\cdot u_{[-15,-10]}}_{L_t^2 L_x^{2}} \\& =
   \norm{P_{[-12,12]}(u_{[-10,10]}\cdot u_{<-15})}_{L_t^2 L_x^{2}}+ \norm{u_{[-10,10]}\cdot u_{[-15,-10]}}_{L_t^2 L_x^{2}} \\&
  \lesssim \norm{P_{[-12,12]}(u_{\ge -15}\cdot u_{<-15})}_{L_t^2 L_x^{2}} + \norm{u}_S^2  \\&
  \sim \norm{P_{[-12,12]}(u_{\ge -15}\cdot u_{\ge -15})}_{L_t^2 L_x^{2}} + \norm{u}_S^2 \lesssim \norm{u}_S^2 
\end{align*}
where in the last line we have used Equation \eqref{eq:key_general} to turn the low-frequency factor into a high frequency one.

{\it{(ii): $k_1 > 10$.}} We decompose
\begin{align}
 \begin{split}\label{eq:lastcase}
  P_0\big[u\times & \big( (-\triangle)^{\frac12}(u_{k_1}\times (-\triangle)^{\frac12}u) -  (u_{k_1}\times (-\triangle u))\big)\big]\\
= & P_0\big[u \times \big((-\triangle)^{\frac12}(u_{k_1}\times (-\triangle)^{\frac12}u_{<k_1-5}) - (u_{k_1}\times (-\triangle u_{<k_1-5}))\big)\big]\\
 + & P_0\big[u \times \big((-\triangle)^{\frac12}(u_{k_1}\times (-\triangle)^{\frac12}u_{[k_1-5, k_1+5]}) - (u_{k_1}\times (-\triangle u_{[k_1-5, k_1+5]}))\big)\big]\\
 + &P_0\big[u \times \big((-\triangle)^{\frac12}(u_{k_1}\times (-\triangle)^{\frac12}u_{>k_1+5}) - (u_{k_1}\times (-\triangle u_{>k_1+5}))\big)\big].
 \end{split}
\end{align}


The first and the last line are easy to handle since in both cases the first factor $u$ can be restricted to frequencies $\gtrsim k_1$ and placed into $L_t^2 L_x^{\infty}$ and the other factor can be placed into $L_t^2 L_x^{2}$ e.g. 
\begin{align*}
 &\norm{(-\triangle)^{\frac12}(u_{k_1}\times (-\triangle)^{\frac12}u_{<k_1-5})}_{L_t^2 L_x^2} \lesssim \sum_{k_2 <k_1-5} 2^{k_1} \norm{u_{k_1}}_{L_t^\infty L_x^2}\norm{(-\triangle)^{\frac12}u_{k_2}}_{L_t^2 L_x^\infty}\lesssim 2^{-\frac{k_1}{2}} \norm{u_{k_1}}_S \norm{u}_S; \\ 
 &\norm{u_{k_1}\times (-\triangle u_{>k_1+5})}_{L_t^2 L_x^2} \lesssim \sum_{k_2 >k_1+5} \norm{u_{k_1}}_{L_t^2 L_x^\infty}\norm{\triangle u_{k_2}}_{L_t^\infty L_x^2}\lesssim \norm{u_{k_1}}_S\norm{u}_S; \quad \text{etc.}
\end{align*}
Hence we have bounded the first and the last line in Equation \eqref{eq:lastcase} by $2^{-\frac{k_1}{2}}  \norm{u_{k_1}}_S \norm{u}_S^2$. 

Finally, for the term in the middle we first bound
\begin{align*}
 \norm{P_0& \big[u_{\ge -5} \times \big((-\triangle)^{\frac12}(u_{k_1}\times (-\triangle)^{\frac12}u_{[k_1-5, k_1+5]}) - (u_{k_1}\times (-\triangle u_{[k_1-5, k_1+5]}))\big)\big]}_{L_t^1 L_x^2} \\& 
 \lesssim \norm{u_{\ge -5}}_{L_t^2 L_x^{\infty}} 2^{2k_1} \norm{u_{k_1}}_{L_t^2 L_x^{\infty}}  \norm{u_{[k_1-5, k_1+5]}}_{L_t^\infty L_x^{2}}  \lesssim 2^{-\frac{k_1}{2}} \norm{u_{k_1}}_S \norm{u}_S^2.
\end{align*}
It remains to estimate 
\begin{equation}\label{eq:lastone}
 \norm{P_0 \big[u_{<-5} \times \big((-\triangle)^{\frac12}(u_{k_1}\times (-\triangle)^{\frac12}u_{[k_1-5, k_1+5]}) - (u_{k_1}\times (-\triangle u_{[k_1-5, k_1+5]}))\big)\big]}_{L_t^1 L_x^2}.
\end{equation}
For the second term we can directly apply Equation \eqref{eq:crosspr} to obtain
\begin{equation}\label{eq:secondterm}
 \norm{P_0 \big[u_{<-5} \times (u_{k_1}\times (-\triangle u_{[k_1-5, k_1+5]}))\big]}_{L_t^1 L_x^2} = \norm{P_0\big[u_{k_1}(u_{<-5}\cdot (-\triangle)u_{[k_1-5, k_1+5]})-(-\triangle)u_{[k_1-5, k_1+5]} (u_{<-5}\cdot u_{k_1} )\big]}_{L_t^1 L_x^2}
\end{equation}
Here the second term can be estimated easily after applying Equation \eqref{eq:key_general} to turn the low frequency $u$ into a high-frequency one: 
$$\norm{u_{<-5}\cdot u_{k_1}}_{L_t^1 L_x^\infty} \le \sum_{k\in [k_1-1,k_1+1]} \norm{P_k(u_{<-5} \cdot u)}_{L_t^1 L_x^\infty}\sim \sum_{k\in [k_1-1,k_1+1]} \norm{P_k(u_{\ge -5} \cdot u_{\ge -5})}_{L_t^1 L_x^\infty}\lesssim \norm{u}_S^2$$
For the first term we use Equation \eqref{rescaled2}:
\begin{align*}
 \norm{u_{<-5}\cdot &(-\triangle)u_{[k_1-5, k_1+5]}}_{L_t^2 L_x^2} \le \sum_{k\in [k_1-6,k_1+6]} \norm{P_k(u_{<-5} \cdot (-\triangle)u)}_{L_t^2 L_x^2}\\ &\lesssim 
 \sum_{k\in [k_1-6,k_1+6]} \Big( \norm{P_k(-\triangle)(u_{<-5} \cdot u)}_{L_t^2 L_x^2}+  \sum_{\substack{\ell <-5\\ k_2\in [k-2,k+2]}} 2^{\ell+k} \big\| u_{\ell} \big\|_{L_t^2 L_x^\infty}\big\|u_{k_2}\big\|_{L_t^\infty L_x^2}\Big)
\end{align*}
which after again applying Equation \eqref{eq:key} we bound by $\norm{u}_S^2$.

Finally, for the first term in Equation \eqref{eq:lastone} we  expand the Fourier multiplier $m(\xi,\eta):= \chi_0(\xi + \eta)|\eta| $ as a Fourier series on the support of $\chi_{< -5}(\xi) \chi_{[-1,1]}(\eta)$ to obtain 
\begin{align*}
 P_0 \big[u_{< -5} &\times (-\triangle)^{\frac12}(u_{k_1}\times (-\triangle)^{\frac12}u_{[k_1-5, k_1+5]})\big](x)= 
\\&\sum_{m,p \in \Z^4} a_{mp} u_{< -5}(x+ 2^{-5}m) \times (u_{k_1}(x+2^{-1}p)\times (-\triangle)^{\frac12}u_{[k_1-5, k_1+5]}(x+2^{-1}p))
\end{align*}
 where $|a_{mp}| \lesssim_N (|m|+ |p|)^{-N}$. As in the previous cases we then write 
 $$u_{< -5}(x+ 2^{-5}m) = u_{< -5}(x+ 2^{-1}p)+(2^{-5}m-2^{-1}p) \int_0^1 \nabla u_{< -5} (x+(2^{-5}m-2^{-1}p)s) ds.$$
The part involving the integral is easy to estimate.
Thus we are left with estimating 
$$\norm{P_0 \big[u_{<-5}\times (u_{k_1} \times (-\triangle)^{\frac12}u_{[k_1-5, k_1+5]})\big]}_{L_t^1 L_x^2}.$$
This is done in the same way as above (Equation \eqref{eq:secondterm}) by writing the cross product in terms of inner products and then turning the low frequency $u$ into a high frequency one.  

\end{proof}

The preceding lemmas~\ref{lem:complicatedterm},~\ref{lem:multilin2} after re-scaling immediately yield \eqref{eq:multilin2}, \eqref{eq:multilin3},  and the difference estimate \eqref{eq:multilin5} together with the corresponding difference estimate corresponding to \eqref{eq:multilin3} follows by using the same proofs, involving the use of \eqref{eq:key}, and passing to the differences after this. Furthermore, one uses the important fact that if $\lim_{N\rightarrow +\infty}P_{<-N}\tilde{u}^{(j)} = p\in S^2$, then 
\[
\|\tilde{u}^{(j)} - \tilde{u}^{(i)}\|_{L^\infty}\lesssim \|\tilde{u}^{(j)} - \tilde{u}^{(i)}\|_{S}, 
\]
as follows from Bernstein's inequality. This proves Proposition~\ref{prop:multilinear}.

\section{The iteration}

Here we use the results of the preceding section to construct a global solution for \eqref{eq:halfwmwaveform}. Specifically, we have 
\begin{prop}\label{prop:waveform} Let $u[0] = (u(0, \cdot), u_t(0, \cdot) = (u_0, u_1): \R^{4}\longrightarrow S^2\times TS^2$ where $u_1 = u_0\times (-\triangle)^{\frac12} u_0$ and $u_0$ is $C^\infty$-smooth, constant outside of a compact subset of $\R^4$, and  
\[
\|u_0\|_{\dot{B}^{2,1}_{2}}<\epsilon
\]
for sufficiently small $\epsilon>0$. Then the problem \eqref{eq:halfwmwaveform} admits a global $C^\infty$ solution with initial data $u[0]$. Furthermore, the solution satisfies 
\[
\|u\|_{S}\lesssim \epsilon,
\]
and $u$ scatters in the sense that for suitable $(f_{\pm},g_{\pm})\in \dot{B}^{2,1}_{2}\times \dot{B}^{1,1}_{2}$, we have 
\[
u(t, \cdot) - p = S(t)\big(f_{\pm}, g_{\pm}) + o_{\dot{B}^{2,1}_{2}\times \dot{B}^{1,1}_{2}}(1)
\]
as $t\rightarrow\pm \infty$. Finally, the solution also satisfies the bounds 
\[
\|\nabla_{t,x}^ku\|_{S}\lesssim_k 1
\]
for any $k\geq 1$. 
\end{prop}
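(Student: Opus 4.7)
The plan is a standard bootstrap/continuity argument combining local well-posedness with the a priori estimates of Propositions~\ref{prop:linestimates} and~\ref{prop:multilinear}. First, classical Picard iteration in a high Sobolev space applied to the first-order equation~\eqref{eq:halfwm} produces a unique smooth maximal solution on some interval $[0, T^*)$ (the non-local operator $(-\triangle)^{\frac12}$ is harmless here since $u_0$ differs from $p$ only on a compact set). Since $u \cdot u_t = u \cdot (u \times (-\triangle)^{\frac12}u) = 0$ by the triple scalar product identity, the constraint $|u|^2 \equiv 1$ is automatically preserved on $[0, T^*)$; consequently $u$ satisfies the equivalent wave-form system~\eqref{eq:halfwmwaveform} on this interval, and Proposition~\ref{prop:multilinear} applies with the choice $\tilde{u} = u$.

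The heart of the proof is the a priori control of $\Phi(T) := \|u - p\|_{S([0,T])}$, which is continuous in $T$ with $\Phi(0) = 0$. Combining Proposition~\ref{prop:linestimates} with the bounds~\eqref{eq:multilin1}--\eqref{eq:multilin3} (summing the $P_k$-localized estimates over $k \in \Z$ via Young's convolution inequality against the rapidly decreasing kernel $2^{-\sigma|k-k_1|}$) yields an inequality of the form
\[
\Phi(T) \le C_0 \epsilon + C_1 (1+\Phi(T))^2 \Phi(T)^3
\]
valid for every $T < T^*$. For $\epsilon$ sufficiently small, a standard continuity argument then closes the bootstrap and gives $\Phi(T) \le 2C_0 \epsilon$ uniformly, establishing $\|u\|_S \lesssim \epsilon$.

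The remaining items are then routine. Higher regularity follows by differentiating~\eqref{eq:halfwmwaveform} in $x$: the equation for $\nabla^k u$ is a sum of multilinear terms with exactly one top-order factor $\nabla^k u$ and remaining factors controlled by $\Phi(T) \lesssim \epsilon$, so the same $S$--$N$ multilinear scheme (applied iteratively in $k$ together with Leibniz) yields $\|\nabla^k u\|_S \lesssim_k 1$ by induction on $k$ and one further continuity argument. Persistence of this higher regularity prevents blow-up, so $T^* = +\infty$; time reversal handles $t < 0$. Uniqueness in the $S$-class is immediate from the difference estimates~\eqref{eq:multilin4}--\eqref{eq:multilin5}. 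Scattering in $\dot{B}^{2,1}_2 \times \dot{B}^{1,1}_2$ follows from $\|\Box(u-p)\|_N < \infty$ via Duhamel's formula and the free-wave Strichartz theory. The only genuinely nontrivial ingredient is the multilinear a priori control, which has already been carried out in Proposition~\ref{prop:multilinear} and constitutes the main technical content of the paper; the present proposition is essentially a packaging of those bounds, with the only conceptual subtlety being that one must verify the geometric constraint $|u|^2 = 1$ holds along the flow — which is immediate from the first-order form~\eqref{eq:halfwm} as noted above.
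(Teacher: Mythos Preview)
Your argument is correct, but it follows a genuinely different route from the paper's. The paper constructs the solution to the wave-form system~\eqref{eq:halfwmwaveform} directly via an explicit Picard-type iteration: one defines iterates $u^{(j)}$ where the wave-maps nonlinearity is taken at level $j$ (requiring a sub-iteration $u^{(j,i)}$ to solve the resulting nonlinear problem) while the non-local terms are evaluated at the previous iterate $u^{(j-1)}$. The sphere constraint $|u^{(j)}|^2 = 1$ is then verified inductively by observing that $\Box(|u^{(j)}|^2 - 1) = 2(|u^{(j)}|^2 - 1)(|\nabla u^{(j)}|^2 - |\partial_t u^{(j)}|^2)$ with vanishing data, and convergence of the $u^{(j)}$ in $S$ follows from the difference estimates \eqref{eq:multilin4}--\eqref{eq:multilin5}.

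Your approach instead invokes local well-posedness for the \emph{first-order} equation~\eqref{eq:halfwm} in a high Sobolev space, where the constraint $|u|^2 \equiv 1$ is automatic from $u\cdot u_t = 0$, and then runs a continuity/bootstrap argument for $\|u\|_{S([0,T])}$ using Propositions~\ref{prop:linestimates} and~\ref{prop:multilinear}. This is cleaner in that it avoids the somewhat awkward double iteration and gets the sphere constraint for free; on the other hand, the paper's scheme is entirely self-contained within the wave-equation framework and does not presuppose a separate local theory for~\eqref{eq:halfwm}. Note also that the paper's overall logic runs in the opposite direction from yours: it first builds a solution to~\eqref{eq:halfwmwaveform} and only afterwards (Section~5) shows it solves~\eqref{eq:halfwm}, whereas you go first-order $\Rightarrow$ wave-form. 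Both routes rest on the same multilinear core, Proposition~\ref{prop:multilinear}, and both require the geometric identity $u\cdot u = 1$ at the point where those estimates are invoked.
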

\begin{rem} One can easily generalise the condition on the second component of the data $u_1$ in this proposition. However, for the application in the proof of Theorem~\ref{thm:Main}, we need precisely this type of data. 

\end{rem}
\begin{proof} To begin with, observe that we have 
\[
\|u_0\times (-\triangle)^{\frac12}u_0\|_{\dot{B}^{1,1}_{2}}\lesssim \|u_0\|_{\dot{B}^{2,1}_{2}}.
\]
We follow the same iterative scheme as in [KS], which we recall here for completeness' sake. The solution $u(t, x)$ shall be obtained as the limit of a sequence of iterates $u^{(j)}$, starting with the constant function $p:=\lim_{|x|\rightarrow\infty}u_0(x)$. Next, $u^{(1)}(t, x)$ solves the wave maps equation with data $u[0]$, i. e. it solves 
\[
(\partial_t^2 - \triangle) u^{(1)} =  u^{(1)}(\nabla u^{(1)}\cdot\nabla u^{(1)} - \partial_t u^{(1)}\cdot\partial_t u^{(1)}),\,u^{(1)}[0] = u[0].
\]
This is obtained in the form $u^{(1)} = p + \sum_{k\in Z}u^{(1)}_k$ by invoking Proposition \ref{prop:linestimates} and the estimates \eqref{eq:multilin1}, \eqref{eq:multilin4}. Further, for $j\geq 2$, we let $u^{(j)}$ solve the system 
 \begin{equation*}\label{eq:iterate}\begin{split}
&(\partial_t^2 - \triangle) u^{(j)}\\&= u^{(j)}(\nabla u^{(j)}\cdot\nabla u^{(j)} - \partial_t u^{(j)}\cdot\partial_t u^{(j)})\\
&+\Pi_{u_{\perp}^{(j)}}\big((-\triangle)^{\frac{1}{2}}u^{(j-1)}\big)(u^{(j-1)}\cdot (-\triangle)^{\frac{1}{2}}u^{(j-1)})\\
&+\Pi_{u_{\perp}^{(j)}}\big[u^{(j-1)} \times (-\triangle)^{\frac{1}{2}}(u^{(j-1)} \times (-\triangle)^{\frac{1}{2}}u^{(j-1)}) - u^{(j-1)} \times(u^{(j-1)} \times (-\triangle)u^{(j-1)})\big]
\end{split}\end{equation*} 
As $u^{(j)}$ is defined in terms of a nonlinear problem, we have to use a sub-iteration to find it, by means of further iterates $u^{(j,i)}$, $i\geq 0$, with $u^{(j,0)}$ solving the free wave equation with data $u[0]$, and $u^{(j,i)}$, $i\geq 1$, defined in terms of the previous iterate via 
 \begin{equation}\label{eq:iterate1}\begin{split}
&(\partial_t^2 - \triangle) u^{(j,i)}\\&= u^{(j,i-1)}(\nabla u^{(j,i-1)}\cdot\nabla u^{(j,i-1)} - \partial_t u^{(j,i-1)}\cdot\partial_t u^{(j,i-1)})\\
&+\Pi_{u_{\perp}^{(j,i-1)}}\big((-\triangle)^{\frac{1}{2}}u^{(j-1)}\big)(u^{(j-1)}\cdot (-\triangle)^{\frac{1}{2}}u^{(j-1)})\\
&+\Pi_{u_{\perp}^{(j,i-1)}}\big[u^{(j-1)} \times (-\triangle)^{\frac{1}{2}}(u^{(j-1)} \times (-\triangle)^{\frac{1}{2}}u^{(j-1)}) - u^{(j-1)} \times(u^{(j-1)} \times (-\triangle)u^{(j-1)})\big].
\end{split}\end{equation} 
The convergence of \eqref{eq:iterate1} with respect to $\|\cdot\|_{S}$ is a consequence of \eqref{eq:multilin1} - \eqref{eq:multilin4}, provided one checks that all $u^{(j)}$ map into the sphere. This is clear for $u^{(j)}, j = 0,1$, and follows from 
\[
\Box(u^{(j)}\cdot u^{(j)} - 1) = 2(u^{(j)}\cdot u^{(j)} - 1)(\nabla u^{(j)}\cdot\nabla u^{(j)} - \partial_t u^{(j)}\cdot\partial_t u^{(j)}),\,(u^{(j)}\cdot u^{(j)} - 1)[0] = 0
\]
for all $j\geq 2$. Observe that the $u^{(j)}$ are in fact $C^\infty$ by differentiating the equation and again using \eqref{eq:multilin1} - \eqref{eq:multilin4}. Convergence of the iterates $u^{(j)}$ then also follows from \eqref{eq:multilin1} - \eqref{eq:multilin4} by passing to the difference equations. Finally, the scattering assertion follows in standard fashion from the fact that the right hand source terms in \eqref{eq:halfwmwaveform} can all be placed in $N$, see e. g. \cite{Tat3}. 

\end{proof}

\section{Proof of Theorem~\ref{thm:Main}}

Given smooth $u_0\rightarrow S^2$ as in the statement of Theorem~\ref{thm:Main}, we apply Proposition~\ref{prop:waveform} to obtain a global $C^\infty$ solution for \eqref{eq:halfwmwaveform} with initial data $u[0] = \big(u_0, u_0\times (-\triangle)^{\frac12}u_0)\big)$. It remains to show that this $u$ actually solves \eqref{eq:halfwm}. We proceed as in \cite{KS}, section 6, but need to modify the estimates there somewhat. Introduce 
\[
X: = u_t - u\times (-\triangle)^{\frac12}u,
\]
as well as the energy type functional 
\[
\tilde{E}(t): = \frac12\int_{\R^4}\big|(-\triangle)^{\frac{1}{4}}X(t,\cdot)\big|^2\,dx.
\]
Observe that directly integrating \eqref{eq:halfwm} over compact time intervals and using the compact support of $\nabla_x u_0$,  we have that $\|u_t\|_{\dot{H}^{\frac12}} + \|u\|_{\dot{H}^{\frac32}}<\infty$ at all times, which implies that $\tilde{E}(t)$ is well-defined, and smoothness of $u$ implies that it can be differentiated with respect to $t$. Proceeding as in \cite{KS}, section 6, we deduce the relation 
\begin{align*}
\partial_tX = -X\times  (-\triangle)^{\frac12}u  - u\times(-\triangle)^{\frac12}X - u\big(X\cdot(u\times  (-\triangle)^{\frac12}u + u_t)\big),
\end{align*}
and so we infer 
\begin{equation}\label{eq:technical10}\begin{split}
\frac{d}{dt}\tilde{E}(t) =& -\int_{\R^4}(-\triangle)^{\frac{1}{4}}\big(X\times  (-\triangle)^{\frac12}u +  u\times(-\triangle)^{\frac12}X\big)\cdot (-\triangle)^{\frac{1}{4}}X\,dx\\
& - \int_{\R^4}(-\triangle)^{\frac{1}{4}}\big(u\big(X\cdot(u\times  (-\triangle)^{\frac12}u + u_t)\big)\big)\cdot(-\triangle)^{\frac{1}{4}}X\,dx.\\
\end{split}\end{equation}
Then taking advantage of Lemma~\ref{lem:KS} we find 
\begin{align*}
&\big\|(-\triangle)^{\frac{1}{4}}\big(X\times  (-\triangle)^{\frac12}u +  u\times(-\triangle)^{\frac12}X\big) - u\times(-\triangle)^{\frac{3}{4}}X \big\|_{L_x^2}\\&\lesssim \big\|(-\triangle)^{\frac{1}{4}}X\big\|_{L_x^2} \big\| (-\triangle)^{\frac12}u\big\|_{L_x^\infty} + \big\|X\big\|_{L_x^{\frac{8}{3}}} \big\|(-\triangle)^{\frac{3}{4}}u\big\|_{L_x^{8}}\\
&\lesssim_u \big\|(-\triangle)^{\frac{1}{4}}X\big\|_{L_x^2},
\end{align*}
where Sobolev's inequality and the higher regularity of $u$ are being used. 
\\
Another application of Lemma~\ref{lem:KS} gives 
\[
u\times(-\triangle)^{\frac{3}{4}}X = (-\triangle)^{\frac14}\big(u\times(-\triangle)^{\frac{2}{4}}X\big) + O_{L_x^2}\big(\|\nabla_x u\|_{L_x^\infty}\|(-\triangle)^{\frac{1}{4}}X\|_{L_x^2}\big),
\]
and since  
\[
\int_{\R^4}(-\triangle)^{\frac14}\big(u\times(-\triangle)^{\frac{2}{4}}X\big)\cdot (-\triangle)^{\frac{1}{4}}X\,dx = 0,
\]
we can estimate the first term on the right of \eqref{eq:technical10} by 
\begin{align*}
&\big|\int_{\R^4}(-\triangle)^{\frac{1}{4}}\big(X\times  (-\triangle)^{\frac12}u +  u\times(-\triangle)^{\frac12}X\big)\cdot (-\triangle)^{\frac{1}{4}}X\,dx\big|\\
&\leq\big|\int_{\R^4}(-\triangle)^{\frac{1}{4}}\big(X\times  (-\triangle)^{\frac12}u +  u\times(-\triangle)^{\frac12}X- u\times(-\triangle)^{\frac{3}{4}}X\big)\cdot (-\triangle)^{\frac{1}{4}}X\,dx\big|\\
& + O\big(\|\nabla_x u\|_{L_x^\infty}\|(-\triangle)^{\frac{1}{4}}X\|_{L_x^2}^2\big)\\
&\lesssim_{u} \|(-\triangle)^{\frac{1}{4}}X\|_{L_x^2}^2.
\end{align*}
The second term on the right hand side of \eqref{eq:technical10} is estimated similarly, using 
\[
\|X\cdot(u\times  (-\triangle)^{\frac12}u + u_t)\|_{L_x^2}\lesssim_{u}  \|(-\triangle)^{\frac{1}{4}}X\|_{L_x^2}. 
\]
We conclude that 
\[
\frac{d}{dt}\tilde{E}(t) \lesssim_u \tilde{E}(t), 
\]
and in light of $\tilde{E}(0) = 0$, we infer $\tilde{E}(t) = 0$ for all $t$, which implies that $u$ indeed solves \eqref{eq:halfwm}.

\medskip
\medskip

\centerline{\scshape Anna Kiesenhofer}
\medskip
{\footnotesize
 \centerline{B\^{a}timent des Math\'ematiques, EPFL}
\centerline{Station 8, 
CH-1015 Lausanne, 
  Switzerland}
  \centerline{\email{anna.kiesenhofer@epfl.ch}}
} 

\medskip

\centerline{\scshape Joachim Krieger }
\medskip
{\footnotesize
 \centerline{B\^{a}timent des Math\'ematiques, EPFL}
\centerline{Station 8, 
CH-1015 Lausanne, 
  Switzerland}
  \centerline{\email{joachim.krieger@epfl.ch}}
} 

\end{document}